\let\BFseries\bfseries\def\bfseries{\BFseries\mathversion{bold}} 
\newcommand{\eps}{\varepsilon}
\theoremstyle{plain}
\newtheorem{thm}{Theorem}
\newtheorem{prop}[thm]{Proposition}
\newtheorem{cor}[thm]{Corollary}
\theoremstyle{definition}
\newtheorem{rem}[thm]{Remark}
\newtheorem{exa}[thm]{Example}
\renewenvironment{proof}[1][] {\smallskip \noindent {\bf Proof#1:} }{\hspace*{\fill}$\square$\medskip\par}
\def\P{{\bf {\mathbb{P}}}}
\newcommand{\pr}[1]{\P\left[#1\right]}
\def\E{\mathbb{E}} 
\newcommand{\norm}[1]{\left\|#1\right\|}
\def\R{\mathbb{R}}
\newcommand{\gap}{\gtrsim}\newcommand{\lap}{\lesssim}
\newcommand{\lew}{\precapprox}\newcommand{\gew}{\succapprox}
\newcommand{\dM}{M}
\newcommand{\dif}{\mbox{d}}
\def\dealpha{\tau}
\def\debeta{\theta}
\begin{document}
\title{Path regularity of Gaussian processes via small deviations}
\author{Frank Aurzada}
\date{May 20, 2009}
\maketitle
\begin{abstract} We study the a.s.\ sample path regularity of Gaussian processes. To this end we relate the path regularity directly to the theory of small deviations. In particular, we show that if the process is $n$-times differentiable then the exponential rate of decay of its small deviations is at most $\varepsilon^{-1/n}$. We also show a similar result if $n$ is not an integer.
\end{abstract}
 \noindent{\slshape\bfseries Keywords:} Small deviation; small ball probability; Gaussian process; sample path regularity\\
\noindent{\slshape\bfseries 2000 Mathematics Subject Classification:} 60G15; 60F99\\
\noindent{\slshape\bfseries Running Head:} Path regularity via small deviations

\section{Introduction}
The small deviation problem, also called small ball problem, consists in determining the rate of increase of the quantity
\begin{equation} \label{sd}
-\log \pr{ \norm{X} \leq \eps }, \qquad \text{as $\eps\to 0$.} 
\end{equation}
Here, $X$ is a random variable with values in a normed space $(E,\norm{.})$.

This problem has several connections to approximation quantities for stochastic processes. We refer to \cite{lishao-overview} for an overview of the field and links to applications and to \cite{sdreferences} for a regularly updated list of references. Recently, several articles have focused on the small deviation problem for integrated Gaussian processes, \cite{khoshshi,chenli,gaohannigtorcasso,gaoetal,filletal,nazarovnikitin,lifshitssimon2004,gaomr}. This is mainly due to the connection of the problem to the spectral asymptotics of certain boundary value problems.

In some sense, this paper also considers integrated processes. However, we do not aim at finding the rate in (\ref{sd}) for Gaussian processes, but rather at showing that this rate is directly related to the sample path regularity of the process.

The idea that the small deviation rate encodes the smoothness properties of the Gaussian process has been present in many articles on small deviations. However, it seems that no concrete results are available that relate the small deviation rate \emph{directly} to smoothness properties of the process, e.g.\ differentiability. The aim of this article is to provide this direct link.

In particular, we are going to show (Corollary~\ref{cor:nregularity} below) that if a Gaussian process is $n$-times differentiable then for its small deviation rate
$$
-\log \pr{ \norm{X}_{L_\infty[0,1]} \leq \eps } \leq c \,\eps^{-1/n}, \qquad \text{as $\eps\to 0$.} 
$$
We also show a similar result if $n$ is not an integer. As we shall see, this provides sharp criteria for the path regularity of Gaussian processes. Note that for example for Brownian motion the small deviation rate is $\eps^{-2}$, which corresponds to being H\"{o}lder continuous up to $\frac{1}{2}$. A consequence is that for a $\mathcal{C}^\infty$ process we have that, for any $\delta>0$,
$$
\lim_{\eps\to 0} \eps^\delta\left( -\log \pr{ \norm{X}_{L_\infty[0,1]} \leq \eps }\right) = 0. 
$$

These results, combined with Li's weak decorrelation inequality \cite{lidecorr}, have one further consequence (Corollary~\ref{cor:remainderterm} below): If the Gaussian process $X$ can be represented as $X=Y+Z$, with $Y$ and $Z$ not necessarily independent, and $Z$ is smooth enough then $X$ and $Y$ have the same small deviation order. This can be used to show that the small deviation order of smoother `remainder terms' does not matter, as will be demonstrated with some examples.

For showing the relation between path regularity and small deviations we employ a result developed by Chen and Li in \cite{chenli}. They show the following (Theorem~1.2 in \cite{chenli}).

\begin{prop} \label{prop:chenli} Let $X$ be a centered Gaussian random variable with values in some separable Banach space $(E,\norm{.})$. Let $\mathcal{H}$ be the reproducing kernel Hilbert space of $X$ and denote by $|.|_\mathcal{H}$ the norm induced by the inner product in $\mathcal{H}$. Let $Y$ be another Gaussian random variable in $(E,\norm{.})$, not necessarily independent of $X$. Then, for any $\lambda, \eps>0$,
\begin{equation} \label{eqn:chenli1}
\pr{ \norm{Y} \leq  \eps} \geq \pr{ \norm{X} \leq  \lambda \eps} \E \exp\left( -\frac{\lambda^2}{2}\, |Y|_\mathcal{H}^2 \right).
\end{equation}

In particular, let $E$ be a space of functions from $[0,1]$ to $\R$ and let $B$ be Brownian motion. Then
\begin{equation} \label{eqn:clbm}
\pr{ \norm{Y} \leq  \eps} \geq \pr{ \norm{B} \leq  \lambda \eps} \E \exp\left( -\frac{\lambda^2}{2} \norm{Y'}_{L_2[0,1]}^2 \right),
\end{equation}
for any $\lambda, \eps>0$ and any Gaussian random variable $Y$ with values in $E$, where $Y'$ is the derivative of $Y$.
\end{prop}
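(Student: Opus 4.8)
The plan is to prove \eqref{eqn:chenli1} by sandwiching an auxiliary small-ball probability between two classical Gaussian estimates. Since both sides of \eqref{eqn:chenli1} depend only on the marginal laws of $X$ and $Y$, I may replace $(X,Y)$ by an independent pair with the same marginals; so assume from now on that $X$ and $Y$ are independent. The key object is $\pr{\norm{\lambda^{-1}X+Y}\le\eps}$. I claim
\[
\pr{\norm{X}\le\lambda\eps}\,\E\exp\!\big(-\tfrac{\lambda^2}{2}|Y|_\mathcal{H}^2\big)\ \le\ \pr{\norm{\lambda^{-1}X+Y}\le\eps}\ \le\ \pr{\norm{Y}\le\eps},
\]
and \eqref{eqn:chenli1} is exactly the outer inequality. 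The right-hand inequality is a consequence of Anderson's inequality, while the left-hand inequality comes from the Cameron--Martin formula.

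For the left inequality I would first record the shift estimate: if $Z$ is a centered Gaussian with reproducing kernel Hilbert space norm $|\cdot|_{\mathcal H_Z}$ and $h\in\mathcal H_Z$, then $\pr{\norm{Z+h}\le\delta}\ge\pr{\norm{Z}\le\delta}\exp(-\tfrac12|h|_{\mathcal H_Z}^2)$ for every $\delta>0$. This follows from the Cameron--Martin theorem, which expresses $\pr{\norm{Z+h}\le\delta}$ as $\int_{\{\norm z\le\delta\}}\exp(\ell_h(z)-\tfrac12|h|_{\mathcal H_Z}^2)\,\d\mu(z)$, where $\mu$ is the law of $Z$ and $\ell_h$ is the Paley--Wiener functional, which is linear and odd in $z$; symmetrizing over $z\mapsto -z$ (legitimate since $\mu$ is symmetric and the ball is symmetric) replaces $e^{\ell_h}$ by $\cosh(\ell_h)\ge 1$, which gives the claim. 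I then apply this with $Z=\lambda^{-1}X$, whose RKHS norm is $\lambda|\cdot|_\mathcal{H}$, and with $h$ equal to a realization of $Y$ (reading $|y|_\mathcal{H}=\infty$, hence the bound as $0$, when $y\notin\mathcal{H}$); integrating over the independent law of $Y$ yields the left inequality.

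For the right inequality I condition on $X=u$ and apply Anderson's inequality to the symmetric convex ball $\{\norm\cdot\le\eps\}$ and the centered Gaussian law of $Y$: this gives $\pr{\norm{\lambda^{-1}u+Y}\le\eps}\le\pr{\norm Y\le\eps}$ for each fixed $u$, and integrating over $X$ finishes it. Combining the two halves proves \eqref{eqn:chenli1}. The step I expect to require the most care is the shift estimate, where one must justify the Cameron--Martin change of measure and the symmetrization cleanly on the separable Banach space $E$, and handle the case $Y\notin\mathcal{H}$ so that the expectation $\E\exp(-\tfrac{\lambda^2}{2}|Y|_\mathcal{H}^2)$ is well defined (the map $y\mapsto|y|_\mathcal{H}\in[0,\infty]$ is measurable, so this causes no genuine difficulty).

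Finally, \eqref{eqn:clbm} is the specialization $X=B$. The reproducing kernel Hilbert space of Brownian motion on $[0,1]$ is the Cameron--Martin space of absolutely continuous $f$ with $f(0)=0$ and $f'\in L_2[0,1]$, with $|f|_\mathcal{H}=\norm{f'}_{L_2[0,1]}$; substituting this identity (and reading $|Y|_\mathcal{H}=\infty$, so that the factor vanishes, when $Y$ fails to lie in this space) turns \eqref{eqn:chenli1} into \eqref{eqn:clbm}.
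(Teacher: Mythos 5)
Your argument is correct. Note that the paper does not prove this proposition at all --- it is quoted verbatim from Chen and Li \cite{chenli} (their Theorem~1.2) --- and what you have written is essentially their original argument: reduce to independent $X$ and $Y$ (legitimate, since both sides depend only on the two marginal laws), sandwich $\pr{\norm{\lambda^{-1}X+Y}\le\eps}$ between the Anderson upper bound and the Cameron--Martin/symmetrization lower bound, and specialize to the Brownian Cameron--Martin space for (\ref{eqn:clbm}). The only point worth making explicit is that the Anderson step uses that $Y$ is \emph{centered} Gaussian, which is the intended reading of the hypothesis; with that noted, the proof is complete.
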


This result was used to derive the small deviation rate for the $m$-fold integrated Brownian motion in \cite{chenli}. The procedure is as follows: Let $Y$ be integrated Brownian motion. Knowning the small deviation rate of Brownian motion $Y'$ gives the rate of the Laplace transform on the right-hand side in (\ref{eqn:clbm}), when $\lambda\to\infty$. The small deviation probability of Brownian motion $B$ w.r.t.\ $\norm{.}$ on the right hand side in (\ref{eqn:clbm}) is known as well. This gives a lower bound for the small deviation probability of  integrated Brownian motion $Y$ for basically any norm $\norm{.}$. The procedure can be iterated. On the other hand, the upper bound for the small deviation rate of $Y$ e.g.\ for the $L_\infty$-norm can be obtained simply by comparison to the easier $L_2$-norm.

The focus of the present note is
\begin{itemize}
 \item to formulate the idea from \cite{chenli} in a general framework (Section~\ref{sec:general}), and to extend it to fractional derivatives (Section~\ref{sec:frac}),
 \item to show that this leads to information on the a.s.\ path regularity of the Gaussian process under consideration (Section~\ref{sec:pr}), 
 \item to study a conditional version of (\ref{eqn:chenli1}) that can be applied in particular to stable processes (Section~\ref{sec:stable}), and
 \item to investigate relations to other questions (Sections~\ref{sec:eigenvalues}, \ref{sec:quantization}, \ref{sec:entropy}) and concrete examples (Section~\ref{sec:examples}).
\end{itemize}

In this paper, we let $X$ be a real-valued, centered Gaussian process indexed by $[0,1]$ with $X(0)=0$ a.s. The restriction $X(0)=0$ is for simplicity only. We use $\sim$, $\gap$, and $\lap$ for strong asymptotics, i.e.\ $f\gap g$ or $g\lap f$ if $\limsup f/g \leq 1$, $f \sim g$ if $\lim f/g =1$, whereas $\approx$, $\lew$, and $\gew$ stand for weak asymptotics, i.e.\ $f\gew g$ or $g\lew f$ if $\limsup f/g < \infty$ and $f\approx g$ if $f \lew g $ and $f\gew g$. We frequently use $1/0=\infty$ and $1/\infty=0$.

\section{Results for small deviations}
\subsection{First results} \label{sec:general}
Our first theorem concretizes the method used in \cite{chenli}. Here, $X'$ denotes the derivative of $X$.
\begin{thm} \label{thm:bmint} Let $0<\dealpha\leq \infty$, $\debeta\in\R$, and $1\leq p \leq \infty$. Then
\begin{equation} \label{eqn:ass1}
-\log \pr{ \norm{X'}_{L_2[0,1]} \leq  \eps} \lap \kappa \eps^{-1/\dealpha} |\log \eps|^{\debeta}
\end{equation}
implies
$$
-\log \pr{ \norm{X}_{L_p[0,1]} \leq  \eps} \lap C \eps^{-1/(\dealpha+1)} |\log \eps|^{\debeta\dealpha/(\dealpha+1)},
$$
where $C=C(\kappa,\kappa_p)$ and $\kappa_p$ is the small deviation constant of Brownian motion w.r.t.\ the $L_p$-norm.
\end{thm}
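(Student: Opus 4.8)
The plan is to apply the Chen--Li inequality \eqref{eqn:clbm} with $Y=X$ and $\norm{\cdot}=\norm{\cdot}_{L_p[0,1]}$, and then to optimize over the free parameter $\lambda$. Taking $-\log$ of \eqref{eqn:clbm} turns the product into a sum, giving, for every $\lambda,\eps>0$,
\[
-\log\pr{\norm{X}_{L_p[0,1]}\leq\eps}\leq -\log\pr{\norm{B}_{L_p[0,1]}\leq\lambda\eps}-\log\E\exp\!\Bigl(-\tfrac{\lambda^2}{2}\norm{X'}_{L_2[0,1]}^2\Bigr).
\]
I would estimate the two terms on the right separately and then let $\lambda=\lambda(\eps)\to\infty$ as $\eps\to0$ so as to balance them.

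For the first term I would invoke the known small deviation rate of Brownian motion in the $L_p$-norm, $-\log\pr{\norm{B}_{L_p[0,1]}\leq\delta}\sim\kappa_p\,\delta^{-2}$ as $\delta\to0$; with $\delta=\lambda\eps$ this contributes $\kappa_p\lambda^{-2}\eps^{-2}$. The second, Laplace-transform, term is the technical heart, and here the hypothesis on $X'$ enters. The key observation is that only the \emph{easy} direction of the small-ball/Laplace correspondence is needed: writing $T:=\norm{X'}_{L_2[0,1]}^2$ and $u:=\lambda^2/2$, the elementary bound $\E e^{-uT}\geq e^{-ut}\,\pr{T\leq t}$, valid for every $t>0$, yields
\[
-\log\E e^{-uT}\leq\inf_{t>0}\Bigl(ut-\log\pr{T\leq t}\Bigr)\leq\inf_{t>0}\Bigl(ut+\kappa'\,t^{-1/(2\dealpha)}|\log t|^{\debeta}\Bigr),
\]
where the second inequality rewrites \eqref{eqn:ass1} in the variable $t=\eps^2$ (so that $\kappa'$ is a constant arbitrarily close to $\kappa2^{-\debeta}$, for $t$ small). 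This replaces any appeal to de Bruijn's hard exponential Tauberian theorem by a one-line lower bound, precisely because a lower bound on the small-ball probability produces a lower bound on the Laplace transform. A routine Laplace optimization of $ut+\kappa' t^{-1/(2\dealpha)}|\log t|^{\debeta}$ --- the stationarity condition gives $t_*\asymp u^{-2\dealpha/(1+2\dealpha)}$ up to logarithms and $|\log t_*|\sim\tfrac{2\dealpha}{1+2\dealpha}\log u$ --- then produces
\[
-\log\E\exp\!\Bigl(-\tfrac{\lambda^2}{2}\,T\Bigr)\lew\lambda^{2/(1+2\dealpha)}\,(\log\lambda)^{2\dealpha\debeta/(1+2\dealpha)}.
\]

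It remains to minimize the sum of $\kappa_p\lambda^{-2}\eps^{-2}$ and the bound just obtained over $\lambda$. With the ansatz $\lambda=\eps^{-a}|\log\eps|^{c}$, matching the $\eps$-powers of the two terms forces $2a-2=-2a/(1+2\dealpha)$, i.e.\ $a=(1+2\dealpha)/(2(1+\dealpha))$, after which both terms scale like $\eps^{-1/(1+\dealpha)}$ --- the target exponent. Matching the $|\log\eps|$-powers (namely $-2c$ from the first term against $\tfrac{2c}{1+2\dealpha}+\tfrac{2\dealpha\debeta}{1+2\dealpha}$ from the second) forces $c=-\dealpha\debeta/(2(1+\dealpha))$ and produces the common logarithmic exponent $-2c=\dealpha\debeta/(1+\dealpha)$, exactly as claimed; the resulting constant depends only on $\kappa$ and $\kappa_p$.

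I expect the main difficulty to be bookkeeping rather than conceptual: carrying the weak-asymptotic hypothesis \eqref{eqn:ass1} through the two successive optimizations while keeping all multiplicative constants under control, and in particular confirming that the logarithmic factors combine to give precisely $\debeta\dealpha/(\dealpha+1)$ and not merely the correct power of $\eps$. I would also treat the degenerate cases separately, though each reduces to the same computation: $\dealpha=\infty$ (where $\eps^{-1/\dealpha}$ collapses to a purely logarithmic rate and the conclusion becomes $\lap C|\log\eps|^{\debeta}$, consistent with $\dealpha/(\dealpha+1)\to1$), $p=\infty$ (using the classical value $\kappa_\infty=\pi^2/8$ for the sup-norm small deviations of $B$), and $\debeta<0$ (where $c$ changes sign but the optimization is unaffected).
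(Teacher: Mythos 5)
Your proof is correct and follows essentially the same route as the paper's (which establishes Theorem~\ref{thm:bmint} as the special case $M=1$, $\norm{.}=\norm{.}_{L_p[0,1]}$ of Theorem~\ref{thm:mostgen}): apply the Chen--Li inequality with Brownian motion, bound the Laplace transform of $\norm{X'}_{L_2[0,1]}^2$ using the hypothesis, insert the known $L_p$ small deviation rate of $B$, and optimize over $\lambda$ --- your choice of $\lambda=\eps^{-a}|\log\eps|^{c}$ agrees exactly with the paper's. The only difference is that where the paper invokes de Bruijn's exponential Tauberian theorem, you observe that only the elementary direction $\E e^{-uT}\geq e^{-ut}\pr{T\leq t}$ is needed for this one-sided estimate; this is a correct and slightly more self-contained substitution that yields the same exponents and the same dependence $C=C(\kappa,\kappa_p)$.
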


The proof goes along the lines outlined above; we skip it since it is included in the more general Theorem~\ref{thm:mostgen} below.

\begin{rem} \label{rem:constant}
We remark that the constant $C=C(\kappa,\kappa_p)$ can be computed explicitly. The constant is, in general, not the correct small deviation constant for $X$ w.r.t.\ $L_p$-norm. For $\dealpha=\infty$ (explicitly permitted in the above theorem) we get $C=\kappa$. The case $\dealpha=0$ is of special interest; and we treat it in Corollary~\ref{cor:nregularity}.
\end{rem}

Using the same idea as in Theorem~\ref{thm:bmint} we can obtain a converse estimate.

\begin{thm} \label{thm:converseone} Let $0\leq\gamma<1$, $\delta\in\R$ and $1\leq p \leq \infty$.  Then
$$
-\log \pr{ \norm{X}_{L_p[0,1]} \leq  \eps} \gap C \eps^{-\gamma} |\log \eps|^{\delta},
$$
implies
$$
-\log \pr{ \norm{X'}_{L_2[0,1]} \leq  \eps} \gap \kappa \eps^{-\gamma/(1-\gamma)} |\log \eps|^{\delta/(1-\gamma)}
$$
where $\kappa=\kappa(C,\kappa_p)$ and $\kappa_p$ is the small deviation constant of Brownian motion w.r.t.\ the $L_p$-norm.
\end{thm}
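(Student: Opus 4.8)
The plan is to run the proof of Theorem~\ref{thm:bmint} in reverse, again using the Brownian--motion form (\ref{eqn:clbm}) of the Chen--Li inequality, but now extracting information about $X'$ from information about $X$. Write $L(s):=\E\exp(-s\norm{X'}_{L_2[0,1]}^2)$. We may assume $\norm{X'}_{L_2[0,1]}<\infty$ a.s.; otherwise $L\equiv 0$ and, by the zero--one law, $\pr{\norm{X'}_{L_2[0,1]}\le\eps}=0$, so both sides of the conclusion are $+\infty$. Applying (\ref{eqn:clbm}) with $Y=X$ and the $L_p$-norm and rearranging, we obtain for all $\lambda,\eps>0$
$$-\log L(\lambda^2/2)\ \ge\ -\log\pr{\norm{X}_{L_p[0,1]}\le\eps}\ +\ \log\pr{\norm{B}_{L_p[0,1]}\le\lambda\eps}.$$

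First I would feed the two available estimates into this inequality. By hypothesis, for any $C'<C$ and all small $\eps$ the first term on the right is at least $C'\eps^{-\gamma}|\log\eps|^{\delta}$, while the small deviation rate of Brownian motion, $-\log\pr{\norm{B}_{L_p[0,1]}\le u}\sim\kappa_p u^{-2}$, bounds the second term below by $-\kappa_p'(\lambda\eps)^{-2}$ for any $\kappa_p'>\kappa_p$ and small $\lambda\eps$. With $s=\lambda^2/2$ fixed, I would then maximize the resulting lower bound $C'\eps^{-\gamma}|\log\eps|^{\delta}-\frac{\kappa_p'}{2s}\eps^{-2}$ over $\eps$. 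Balancing the two terms yields the optimal scale $\eps=\eps^\ast(s)$ with $\eps^\ast\approx s^{-1/(2-\gamma)}$ up to logarithmic corrections; since then $\lambda\eps^\ast\approx\lambda^{-\gamma/(2-\gamma)}\to0$, the regime in which the two estimates are valid is respected as $\lambda\to\infty$. Substituting back gives, for large $s$,
$$-\log L(s)\ \gap\ A\,s^{\rho}(\log s)^{\sigma},\qquad \rho=\frac{\gamma}{2-\gamma},\quad \sigma=\frac{2\delta}{2-\gamma},$$
with an explicit constant $A=A(C',\kappa_p')$ produced by the optimization.

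The second half of the argument passes from the Laplace transform back to the small ball probability by the Chernoff bound: for every $s>0$,
$$\pr{\norm{X'}_{L_2[0,1]}\le\eps}=\pr{e^{-s\norm{X'}_{L_2[0,1]}^2}\ge e^{-s\eps^2}}\le e^{s\eps^2}L(s),$$
so that $-\log\pr{\norm{X'}_{L_2[0,1]}\le\eps}\ge\sup_{s>0}\left((-\log L(s))-s\eps^2\right)$. Inserting the lower bound on $-\log L(s)$ and optimizing over $s$ (a Legendre transform of a power-times-logarithm, with maximizer $s^\ast\approx\eps^{-2/(1-\rho)}\to\infty$) yields a bound of the form $\kappa\,\eps^{-2\rho/(1-\rho)}|\log\eps|^{\sigma/(1-\rho)}$, where the pairing of $s$ with $\eps^2$ in the Chernoff exponent is what produces the factor $2$ in the power. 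Substituting $\rho=\gamma/(2-\gamma)$ and $\sigma=2\delta/(2-\gamma)$ gives precisely the exponents $\frac{2\rho}{1-\rho}=\frac{\gamma}{1-\gamma}$ and $\frac{\sigma}{1-\rho}=\frac{\delta}{1-\gamma}$ claimed in the statement. Letting $C'\uparrow C$ and $\kappa_p'\downarrow\kappa_p$ finally yields the asserted constant $\kappa=\kappa(C,\kappa_p)$.

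I expect the main obstacle to be the bookkeeping in the two nested optimizations. While the power exponents collapse cleanly (as indicated above), the logarithmic factors $|\log\eps|^{\delta}$ and $(\log s)^{\sigma}$ must be propagated carefully through both the $\eps$- and the $s$-optimization to land precisely on the exponent $\delta/(1-\gamma)$; in particular the maximizer $s^\ast$ itself carries a $(\log s^\ast)^{\sigma/(1-\rho)}$-type correction that feeds back into the final log power, and the constants have to be tracked through both steps to recover the explicit $\kappa=\kappa(C,\kappa_p)$. It is the strictness $\gamma<1$ that keeps the final exponent $\gamma/(1-\gamma)$ finite and the whole optimization non-degenerate.
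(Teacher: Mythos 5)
Your argument is correct and is exactly the route the paper intends: it inverts the proof of Theorem~\ref{thm:mostgen} by applying (\ref{eqn:clbm}) with $Y=X$ to bound the Laplace transform $\E\exp(-\frac{\lambda^2}{2}\norm{X'}_{L_2}^2)$ from above (optimizing over $\eps$ for fixed $\lambda$), and then recovers the small-ball lower bound for $\norm{X'}_{L_2}$ via the exponential Chebyshev inequality, which is the elementary direction of the de Bruijn Tauberian argument the paper uses; the exponent bookkeeping ($\rho=\gamma/(2-\gamma)$, $\sigma=2\delta/(2-\gamma)$, collapsing to $\gamma/(1-\gamma)$ and $\delta/(1-\gamma)$) checks out. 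The only micro-caveat is the boundary case $\gamma=0$, where $\lambda\eps^\ast$ stays of order one rather than tending to $0$, but there $\log\pr{\norm{B}_{L_p[0,1]}\leq\lambda\eps^\ast}$ is simply bounded and the conclusion follows even more easily.
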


The cases $\gamma=0$ (included above) and $\gamma=1$ (treated in Corollary~\ref{cor:nregularity}) are of special interest.

\subsection{Fractional derivatives}  \label{sec:frac}
After demonstrating the method from \cite{chenli}, we now extend the idea to a more subtle situation. Here we define the fractional derivative as follows (cf.\ \cite{skm}). Recall that we work with processes with $X(0)=0$. For a given function $F$ with $F(0)=0$ and $\dM>0$, we set
\begin{equation} \label{eqn:fracder1}
\frac{\dif^{\dM} F}{\dif t^{\dM}}(t) = f(t)\qquad :\Leftrightarrow \qquad F(t)=\int_0^t (t-s)^{\dM-1} f(s) \, d s.
\end{equation}
We stress that the $M$-th derivative of $F$ ($M$ integer) coincides with $\dif^{\dM} F/ \dif t^{\dM}$.  If there is no ambiguity we also write for simplicity
$$
F^{(\dM)} = \frac{\dif^{\dM} F}{\dif t^{\dM}}.
$$

Now we get an analog to Theorem~\ref{thm:bmint}.

\begin{thm} \label{thm:hsb}
Let $0<\dealpha\leq \infty$, $\debeta\in\R$, $1\leq p \leq \infty$, and $M>0$. Then
$$
-\log \pr{ \norm{X^{(\dM)}}_{L_2[0,1]} \leq  \eps} \lap \kappa \eps^{-1/\dealpha} |\log \eps|^{\debeta}
$$
implies
$$
-\log \pr{ \norm{X}_{L_p[0,1]} \leq  \eps} \lap C \eps^{-1/( \dealpha+\dM)} |\log \eps|^{\debeta\dealpha/( \dealpha+\dM )},
$$
where $C=C(\kappa,\kappa_p^\dM)$ and $\kappa_p^\dM$ is the small deviation constant of a standard Riemann-Liouville process $R^H$ (cf.\ (\ref{eqn:defrl}) below) with $H=\dM-1/2$ w.r.t.\ the $L_p$-norm.
\end{thm}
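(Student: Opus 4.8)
The plan is to run the argument behind Theorem~\ref{thm:bmint}, but with Brownian motion replaced by the standard Riemann-Liouville process $R^H$ with $H=\dM-1/2$ as the reference process in the Chen-Li bound (\ref{eqn:chenli1}). The reason $R^H$ is the right choice is that its reproducing kernel Hilbert space $\mathcal H$ is exactly the one matched to the fractional derivative $\dif^{\dM}/\dif t^{\dM}$: representing $R^H$ through its Wiener-integral kernel $(t-s)^{\dM-1}$, the space $\mathcal H$ consists of the functions $g(t)=\int_0^t(t-s)^{\dM-1}\phi(s)\,\dif s$ with $\phi\in L_2[0,1]$, carrying the norm $|g|_{\mathcal H}=\norm{\phi}_{L_2[0,1]}$. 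By the defining relation (\ref{eqn:fracder1}) this forces $\phi=g^{(\dM)}$, so that $|X|_{\mathcal H}=\norm{X^{(\dM)}}_{L_2[0,1]}$ for every $X\in\mathcal H$. This is the fractional counterpart of the identity $|Y|_{\mathcal H}=\norm{Y'}_{L_2[0,1]}$ that turns (\ref{eqn:chenli1}) into (\ref{eqn:clbm}) in the Brownian case $\dM=1$.

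Granting this identification, I would apply (\ref{eqn:chenli1}) with $R^H$ as the reference variable---so that the $\mathcal H$ there is its RKHS---and with the process $X$ under study in the role of $Y$. For all $\lambda,\eps>0$ this yields
$$
\pr{\norm{X}_{L_p[0,1]}\leq\eps}\geq\pr{\norm{R^H}_{L_p[0,1]}\leq\lambda\eps}\,\E\exp\left(-\frac{\lambda^2}{2}\,\norm{X^{(\dM)}}_{L_2[0,1]}^2\right),
$$
and passing to negative logarithms splits the bound into a reference term $-\log\pr{\norm{R^H}_{L_p[0,1]}\leq\lambda\eps}$ and a Laplace-transform term $-\log\E\exp(-\tfrac{\lambda^2}{2}\norm{X^{(\dM)}}_{L_2[0,1]}^2)$.

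For the reference term I would insert the small deviation asymptotics of the Riemann-Liouville process at level $\delta=\lambda\eps$, namely $-\log\pr{\norm{R^H}_{L_p[0,1]}\leq\delta}\sim\kappa_p^{\dM}\,\delta^{-1/(\dM-1/2)}$, whose constant is exactly the $\kappa_p^{\dM}$ of the statement. For the Laplace-transform term I would use the one-sided exponential Tauberian theorem: through the monotone correspondence between small deviations and Laplace transforms of the squared norm, the hypothesis $-\log\pr{\norm{X^{(\dM)}}_{L_2[0,1]}\leq\eps}\lap\kappa\eps^{-1/\dealpha}|\log\eps|^{\debeta}$ transfers into $-\log\E\exp(-\tfrac{\lambda^2}{2}\norm{X^{(\dM)}}_{L_2[0,1]}^2)\lap c\,\lambda^{2/(1+2\dealpha)}(\log\lambda)^{2\dealpha\debeta/(1+2\dealpha)}$ as $\lambda\to\infty$, with $c$ depending only on $\kappa$. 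Constructing $R^H$ and its RKHS for non-integer (and small) $\dM$, together with pinning down the Tauberian step with the correct power of the logarithm, is the part I expect to be the main obstacle; what remains is bookkeeping.

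Finally I would optimize over $\lambda$. Balancing the reference and Laplace-transform contributions forces $\lambda\approx\eps^{-(1+2\dealpha)/(2(\dealpha+\dM))}$ up to a logarithmic factor, and the two terms then become of order $\eps^{-1/(\dealpha+\dM)}$ in the power; the logarithmic correction hidden in the optimal $\lambda$ is precisely what reduces the Tauberian exponent $2\dealpha\debeta/(1+2\dealpha)$ to the asserted power $\debeta\dealpha/(\dealpha+\dM)$. This gives $-\log\pr{\norm{X}_{L_p[0,1]}\leq\eps}\lap C\eps^{-1/(\dealpha+\dM)}|\log\eps|^{\debeta\dealpha/(\dealpha+\dM)}$, with $C=C(\kappa,\kappa_p^{\dM})$ read off by tracking constants through the Tauberian step and the balancing. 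The case $\dealpha=\infty$ is covered by letting the Laplace-transform term carry the whole estimate, while $\dM=1$ recovers Theorem~\ref{thm:bmint} since then $R^H$ is Brownian motion.
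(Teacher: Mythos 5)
Your proposal follows essentially the same route as the paper, which proves this statement as the special case $\norm{.}=\norm{.}_{L_p[0,1]}\in\mathbf{N}(-1/p,p)$ of Theorem~\ref{thm:mostgen}: identify the RKHS of $R^H$ with $H=\dM-1/2$ so that $|X|_{\mathcal H}=\norm{X^{(\dM)}}_{L_2[0,1]}$, apply the Chen--Li inequality (\ref{eqn:chenli1}) with $R^H$ as reference, convert the hypothesis into a Laplace-transform bound via de Bruijn's Tauberian theorem, insert the Lifshits--Simon asymptotics for $R^H$, and optimize over $\lambda$ with exactly the polynomial and logarithmic corrections you describe. The exponents you compute (including the choice $\lambda\approx\eps^{-(1+2\dealpha)/(2(\dealpha+\dM))}$ and the reduction of the log-power to $\debeta\dealpha/(\dealpha+\dM)$) match the paper's.
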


The proof goes along the same lines as the one of Theorem~\ref{thm:bmint}. The only difference is that we use the Riemann-Liouville process $R^H$ instead of Brownian motion $B$. An analog to Theorem~\ref{thm:converseone} also holds true.

For $\dealpha=\infty$ (explicitly permitted in the above theorem) we get $C=\kappa$. Remark~\ref{rem:constant} applies accordingly.

\subsection{General result for translation invariant, self-similar, pseudo-additive norms}
In this section, we give the most general result for small devations. For this purpose, we recall from \cite{lifshitssimon2004} the notion of $\norm{.}$ being a translation invariant, $\beta$-self-similar, and $p$-pseudo additive functional semi-norm, for short $\norm{.}\in \mathbf{N}(\beta,p)$.
Here, we require that $\norm{.}$ be a true norm of a separable Banach space. Instead of rewriting the definition from \cite{lifshitssimon2004} we recall that the notion includes, for example, $L_p[0,1]$-norms ($\in\mathbf{N}(-1/p,p)$), $\eta$-H\"{o}lder norms ($\in\mathbf{N}(\eta,\infty)$), and the $p$-variation norm ($\in\mathbf{N}(0,p)$). Possibly it also includes certain Besov and Sobolev norms, see remarks in \cite{lifshitssimon2004}. Since $X(0)=0$, we are sure that we deal with a norm rather than a semi-norm.

\begin{thm} \label{thm:mostgen} Let $\norm{.}\in \mathbf{N}(\beta,p)$, $M>0$, $0<\dealpha\leq\infty$, and $\debeta\in\R$. Then
\begin{equation} \label{eqn:ass2}
-\log \pr{ \norm{X^{(\dM)}}_{L_2[0,1]} \leq  \eps} \lap \kappa \eps^{-1/\dealpha} |\log \eps|^{\debeta}
\end{equation}
implies
$$
-\log \pr{ \norm{X} \leq  \eps} \lap C \eps^{-1/( \dealpha+\dM -\beta-1/p)} |\log \eps|^{\debeta\dealpha/( \dealpha+\dM -\beta-1/p)},
$$
where $C=C(\kappa,\kappa_{\norm{.}}^\dM)$ and $\kappa_{\norm{.}}^\dM$ is the small deviation constant of a standard Riemann-Liouville process $R^H$ (cf.\ (\ref{eqn:defrl}) below) with $H=\dM-1/2$ w.r.t.\ $\norm{.}$.
\end{thm}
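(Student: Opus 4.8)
The plan is to run the argument sketched for Theorem~\ref{thm:bmint}, but with Brownian motion replaced by the standard Riemann--Liouville process $R^H$ of index $H=\dM-1/2$, whose kernel is a constant multiple of $(t-s)^{\dM-1}$ (cf.\ (\ref{eqn:defrl})). The starting point is the identification of the reproducing kernel Hilbert space $\mathcal H$ of $R^H$: it is exactly the image of $L_2[0,1]$ under the fractional integration operator of order $\dM$ appearing in (\ref{eqn:fracder1}), and since that operator is injective an element $F\in\mathcal H$ satisfies $|F|_{\mathcal H}=\norm{F^{(\dM)}}_{L_2[0,1]}$ up to the normalising constant of $R^H$, where $F^{(\dM)}$ is the fractional derivative of (\ref{eqn:fracder1}). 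This is precisely why the fractional derivative was defined as the inverse of this fractional integration. With this identification I would apply the Chen--Li inequality (\ref{eqn:chenli1}), taking the roles of $X$ and $Y$ there to be $R^H$ and the process $X$ of the theorem, to obtain for all $\lambda,\eps>0$
\begin{equation} \label{eqn:plan-cl}
\pr{\norm{X}\leq\eps}\geq\pr{\norm{R^H}\leq\lambda\eps}\;\E\exp\left(-\frac{\lambda^2}{2}\norm{X^{(\dM)}}_{L_2[0,1]}^2\right).
\end{equation}

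Two asymptotic inputs then feed into (\ref{eqn:plan-cl}). First, the small deviation rate of $R^H$ with respect to $\norm{.}$: because $R^H$ is $H$-self-similar and $\norm{.}\in\mathbf N(\beta,p)$ is translation invariant, $\beta$-self-similar and $p$-pseudo-additive, the results of \cite{lifshitssimon2004} give
\begin{equation} \label{eqn:plan-rl}
-\log\pr{\norm{R^H}\leq\mu}\approx\kappa_{\norm{.}}^{\dM}\,\mu^{-1/(\dM-1/2-\beta-1/p)},\qquad\mu\to0,
\end{equation}
which is a pure power and which fixes the constant $\kappa_{\norm{.}}^{\dM}$. Second, I would convert the hypothesis (\ref{eqn:ass2}) into the $\lambda\to\infty$ behaviour of the Laplace transform in (\ref{eqn:plan-cl}) by an exponential Tauberian theorem (de Bruijn's theorem): applied to $\xi=\norm{X^{(\dM)}}_{L_2[0,1]}^2$, for which $-\log\pr{\xi\leq\delta}\lap\kappa'\delta^{-1/(2\dealpha)}|\log\delta|^{\debeta}$, it yields
\begin{equation} \label{eqn:plan-lt}
-\log\E\exp\left(-\frac{\lambda^2}{2}\,\xi\right)\lap c\,\lambda^{2/(2\dealpha+1)}(\log\lambda)^{2\dealpha\debeta/(2\dealpha+1)},\qquad\lambda\to\infty.
\end{equation}

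Taking $-\log$ in (\ref{eqn:plan-cl}) and inserting (\ref{eqn:plan-rl}) and (\ref{eqn:plan-lt}) gives, for every admissible $\lambda=\lambda(\eps)\to\infty$,
\begin{equation} \label{eqn:plan-opt}
-\log\pr{\norm{X}\leq\eps}\lap\kappa_{\norm{.}}^{\dM}(\lambda\eps)^{-1/(\dM-1/2-\beta-1/p)}+c\,\lambda^{2/(2\dealpha+1)}(\log\lambda)^{2\dealpha\debeta/(2\dealpha+1)}.
\end{equation}
Writing $q=\dealpha+\dM-\beta-1/p$ and using the identity $(2\dealpha+1)+2(\dM-1/2-\beta-1/p)=2q$, the first term decreases and the second increases in $\lambda$, so I would optimise: balancing the two powers forces $\lambda\approx\eps^{-(2\dealpha+1)/(2q)}$, whereupon both $\eps$-exponents collapse to $-1/q$. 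The delicate part is the logarithmic factor: after fixing this leading power one must still optimise the slowly varying (poly-logarithmic) correction to $\lambda$, and this second optimisation multiplies the naive log-exponent by the factor $(2\dealpha+1)/(2q)$, turning $2\dealpha\debeta/(2\dealpha+1)$ into exactly $\debeta\dealpha/q$. This reproduces the claimed bound $C\eps^{-1/q}|\log\eps|^{\debeta\dealpha/q}$, with $C$ depending only on $\kappa$ and $\kappa_{\norm{.}}^{\dM}$; the case $\dealpha=\infty$ is degenerate and is treated separately, giving $C=\kappa$.

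The main obstacle is the first input (\ref{eqn:plan-rl}): establishing the exact small deviation rate, and in particular the correct constant $\kappa_{\norm{.}}^{\dM}$, of the Riemann--Liouville process for an arbitrary norm in $\mathbf N(\beta,p)$. This is where the full force of the self-similarity and pseudo-additivity hypotheses is needed, via \cite{lifshitssimon2004}. The secondary delicate point, as indicated above, is the careful two-layer optimisation over $\lambda$: only by also optimising the logarithmic prefactor does one recover the precise exponent $\debeta\dealpha/q$ on $|\log\eps|$ rather than merely the correct power of $\eps$.
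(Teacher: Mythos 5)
Your proposal is correct and follows essentially the same route as the paper's proof: the RKHS identification of $R^H$, the Chen--Li inequality (\ref{eqn:chenli1}), de Bruijn's Tauberian theorem applied to $\norm{X^{(\dM)}}_{L_2[0,1]}^2$, the Lifshits--Simon rate (\ref{eqn:rlresult}) for $R^H$ in $\mathbf{N}(\beta,p)$, and the two-layer optimisation of $\lambda$ (power plus logarithmic correction), with all exponents matching. The only minor point is that the paper uses the strong asymptotics $\sim$ for (\ref{eqn:rlresult}) (needed to pin down the constant $C$ via the infimum over $D$), where you wrote $\approx$.
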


\begin{proof} Note that (\ref{eqn:ass2}) implies
$$
-\log \pr{ \norm{X^{(\dM)}}_{L_2[0,1]}^2 \leq  \eps} \lap \frac{\kappa}{2^\debeta}\, \eps^{-1/(2\dealpha)} |\log \eps|^{\debeta}.
$$
By de Bruijn's Tauberian theorem (Theorem~4.12.9 in \cite{bgt}), this implies
$$
-\log \E \exp\left( -\frac{\lambda^2}{2} \norm{X^{(\dM)}}_{L_2[0,1]}^2 \right) \lap K\, \lambda^{1/(\dealpha+1/2)} |\log \lambda|^{\debeta\dealpha/(\dealpha+1/2)},
$$
when $\lambda\to\infty$. Here $K$ can be computed explicitly from $\kappa$. We only remark that $\kappa=K$ for $\dealpha=\infty$. Note that the set
$$
\mathcal{H}:=\left\lbrace g : [0,1] \to \R ~\left|~ g(t)=\int_0^t (t-s)^{H-1/2} f(s)\, d s, f\in L_2[0,1], f(0)=0 \right.\right\rbrace
$$
with $H=M-1/2$ and norm
$$
\norm{g}_\mathcal{H} = \norm{f}_{L_2[0,1]}=\norm{g^{(M)}}_{L_2[0,1]}
$$
is the reproducing kernel Hilbert space (\cite{lifshits}) of the Riemann-Liouville process \begin{equation}\label{eqn:defrl}
R^H(t):=\int_0^t (t-s)^{H-1/2} \, d B(s),
\end{equation} where $B$ is a Brownian motion. Therefore, Proposition~\ref{prop:chenli} implies that
\begin{equation} \label{eqn:rlappl}
\pr{ \norm{X} \leq  \eps} \geq \pr{ \norm{R^H} \leq  \lambda \eps} \E \exp\left( -\frac{\lambda^2}{2} \norm{X^{(M)}}_{L_2[0,1]}^2 \right).
\end{equation}
We can use the results for the Riemann-Liouville process from \cite{lifshitssimon2004}, which yield that
\begin{equation} \label{eqn:rlresult}
-\log \pr{ \norm{R^H} \leq  \lambda \eps} \sim \kappa_{\norm{.}}^\dM (\lambda \eps)^{-1/(H-\beta-1/p)},
\end{equation}
as long as $\lambda \eps\to 0$. Set $\gamma:=1/(H-\beta-1/p)$. We use this with 
$$
\lambda:=D\eps^{-(\dealpha +1/2)/(1/\gamma+\tau+1/2)} |\log \eps|^{-\dealpha \debeta/(\gamma(1/\gamma+\tau+1/2))},
$$
where $D$ is some constant. This gives
\begin{multline*}
\lim_{\eps\to 0} \eps^{1/(1/\gamma+\dealpha+1/2)} |\log \eps|^{\dealpha \debeta/(1/\gamma+\dealpha+1/2)}\left(-\log \pr{ \norm{X}_{L_p[0,1]} \leq  \eps}\right) \\ \leq \inf_{D>0} ( \kappa_{\norm{.}}^\dM D^{-\gamma} + K D^{1/(\dealpha+1/2)}) =: C=C(\kappa,\kappa_{\norm{.}}^\dM).
\end{multline*}

\end{proof}

Analogously to Theorem~\ref{thm:converseone} we can prove the following in the general setup.

\begin{thm} \label{thm:conversegen} Let $0\leq\gamma<1$, $\delta\in\R$, and $\norm{.}\in\mathbf{N}(\beta,p)$.  Then
$$
-\log \pr{ \norm{X} \leq  \eps} \gap C \eps^{-\gamma} |\log \eps|^{\delta},
$$
implies
$$
-\log \pr{ \norm{X^{(M)}}_{L_2[0,1]} \leq  \eps} \gap \kappa \eps^{-1/(1/\gamma-M+\beta+1/p)} |\log \eps|^{\delta/(\gamma (1/\gamma-M+\beta+1/p))}.
$$
\end{thm}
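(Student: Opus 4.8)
The plan is to reverse, step by step, the chain used for Theorem~\ref{thm:mostgen}, in the same way that Theorem~\ref{thm:converseone} reverses Theorem~\ref{thm:bmint}. I would again invoke Proposition~\ref{prop:chenli} with the Riemann--Liouville process $R^H$, $H=M-1/2$, playing the role of $X$, whose reproducing kernel Hilbert space was already identified in the proof of Theorem~\ref{thm:mostgen} through $|g|_{\mathcal H}=\norm{g^{(M)}}_{L_2[0,1]}$. Reading the resulting inequality~(\ref{eqn:rlappl}) as a bound on the Laplace transform and taking logarithms gives, for all $\lambda,\eps>0$,
\begin{equation*}
-\log\E\exp\left(-\frac{\lambda^2}{2}\norm{X^{(M)}}_{L_2[0,1]}^2\right)\ \geq\ \left(-\log\pr{\norm{X}\leq\eps}\right)-\left(-\log\pr{\norm{R^H}\leq\lambda\eps}\right).
\end{equation*}
This is precisely the estimate that, read in the opposite direction, produced Theorem~\ref{thm:mostgen}.

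Into this inequality I would insert the hypothesis $-\log\pr{\norm{X}\leq\eps}\gap C\eps^{-\gamma}|\log\eps|^{\delta}$ together with the sharp Riemann--Liouville asymptotics~(\ref{eqn:rlresult}), namely $-\log\pr{\norm{R^H}\leq\lambda\eps}\sim\kappa_{\norm{.}}^M(\lambda\eps)^{-\gamma_0}$ with $\gamma_0:=1/(M-1/2-\beta-1/p)$, which is legitimate since $\lambda\eps\to0$. For each fixed large $\lambda$ the free parameter $\eps$ is then optimized. Writing $\alpha:=1/\gamma-M+\beta+1/p$, so that the claimed exponent is $1/\alpha$, one notes that in the regime where~(\ref{eqn:rlresult}) is genuinely polynomial (so $\gamma_0>0$) the positivity of $\alpha$ gives $\alpha+1/2>0$, equivalently $\gamma_0>\gamma$; hence the competing powers $\eps^{-\gamma}$ and $(\lambda\eps)^{-\gamma_0}$ balance near $\eps\approx\lambda^{-\gamma_0/(\gamma_0-\gamma)}$, up to a logarithmic correction. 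Carrying out this $\eps$-optimization by the same balancing as in the proof of Theorem~\ref{thm:mostgen}, and using the identity $\gamma\gamma_0/(\gamma_0-\gamma)=1/(\alpha+1/2)$, yields
\begin{equation*}
-\log\E\exp\left(-\frac{\lambda^2}{2}\norm{X^{(M)}}_{L_2[0,1]}^2\right)\ \gap\ \kappa'\,\lambda^{1/(\alpha+1/2)}\,|\log\lambda|^{\delta/(\gamma(\alpha+1/2))},\qquad\lambda\to\infty,
\end{equation*}
for an explicit constant $\kappa'=\kappa'(C,\kappa_{\norm{.}}^M)$.

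Finally I would apply de Bruijn's Tauberian theorem (Theorem~4.12.9 in~\cite{bgt}) in the direction opposite to the one used in Theorem~\ref{thm:mostgen}: from the lower Laplace bound above it returns $-\log\pr{\norm{X^{(M)}}_{L_2[0,1]}^2\leq\eps}\gap \kappa''\eps^{-1/(2\alpha)}|\log\eps|^{\delta/(\gamma\alpha)}$, and passing from the squared norm to the norm via $\pr{\norm{X^{(M)}}_{L_2[0,1]}\leq\eps}=\pr{\norm{X^{(M)}}_{L_2[0,1]}^2\leq\eps^2}$ reproduces exactly the exponent $1/\alpha=1/(1/\gamma-M+\beta+1/p)$ and the logarithmic power $\delta/(\gamma\alpha)=\delta/(\gamma(1/\gamma-M+\beta+1/p))$ asserted in the theorem, the constant $\kappa$ being obtained from $\kappa'$ through these two passages. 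I expect the last step to be the main obstacle: de Bruijn's theorem is an Abelian--Tauberian equivalence, and its Tauberian half (Laplace transform $\Rightarrow$ law of $\norm{X^{(M)}}_{L_2[0,1]}^2$) requires the regular variation of the rate function together with the monotonicity furnished by the nonnegativity of $\norm{X^{(M)}}_{L_2[0,1]}^2$, so these hypotheses must be verified for the bound just produced. A secondary point is bookkeeping: since only the weak asymptotic $\gap$ is available, the $\eps$-optimization and both Tauberian passages must be performed with error terms that remain uniform as $\lambda\to\infty$ and $\eps\to0$, so that the claimed power of $\eps$ is not degraded.
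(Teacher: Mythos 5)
Your proposal is correct and follows exactly the route the paper intends: it reverses the chain of the proof of Theorem~\ref{thm:mostgen} (Chen--Li inequality read as a lower bound on the Laplace transform of $\norm{X^{(M)}}_{L_2[0,1]}^2$, insertion of the Riemann--Liouville asymptotics~(\ref{eqn:rlresult}), optimization in $\eps$ for fixed $\lambda$, then the Tauberian passage back to small deviations), which is precisely what the paper means by ``the proof is analogous to the one of Theorem~\ref{thm:mostgen}''; your exponent bookkeeping, including the identity $\gamma\gamma_0/(\gamma_0-\gamma)=1/(\alpha+1/2)$ and the logarithmic powers, checks out. The only remark worth adding is that your worry about the Tauberian half of de Bruijn's theorem is unnecessary here: the direction you need (lower bound on $-\log\E\exp(-\mu V)$ implies lower bound on $-\log\pr{V\leq\eps}$) follows from the elementary estimate $\pr{V\leq\eps}\leq e^{\mu\eps}\,\E e^{-\mu V}$ followed by optimization in $\mu$, so no regular-variation hypothesis on the one-sided bound is required.
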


The proof is analogous to the one of Theorem~\ref{thm:mostgen}.

\section{Results for the path regularity} \label{sec:pr}
We now come to the mentioned results on the path regularity. In many articles on small deviations for Gaussian processes the authors mention that one can read off  the path regularity from the small deviation results. However, to the knowledge of the author, no concrete result has been available so far.

The results on the path regularity follow from a modification of the proof of Theorem~\ref{thm:mostgen}. Corresponding to $\dealpha=0$ in Theorem~\ref{thm:mostgen} one obtains the following.

\begin{thm} \label{thm:regularity} Let $X^{(\dM)}\in L_2[0,1]$ a.s. with $M>0$. Then
\begin{itemize}
 \item[(i)] for any $1\leq p\leq \infty$,
$$
-\log \pr{ \norm{X}_{L_p[0,1]} \leq  \eps} \lew \eps^{-1/\dM}.
$$
\item[(ii)] Furthermore, if $\norm{.}\in\mathbf{N}(\beta,p)$ then
$$
-\log \pr{ \norm{X} \leq  \eps} \lew \eps^{-1/(\dM-\beta-1/p)}.
$$
\end{itemize}
\end{thm}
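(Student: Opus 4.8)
The plan is to run the argument of Theorem~\ref{thm:mostgen} in the degenerate case $\dealpha=0$, which is not literally covered there since $0<\dealpha$ was assumed. The starting point is again the Chen--Li inequality of Proposition~\ref{prop:chenli}, applied with the Riemann--Liouville process $R^H$ of index $H=M-1/2$, whose reproducing kernel Hilbert space norm is exactly $\norm{\cdot^{(M)}}_{L_2[0,1]}$. As in \eqref{eqn:rlappl} this yields
\begin{equation*}
\pr{\norm{X}\le\eps}\ge\pr{\norm{R^H}\le\lambda\eps}\,\E\exp\left(-\frac{\lambda^2}{2}\norm{X^{(M)}}_{L_2[0,1]}^2\right),
\end{equation*}
and I would bound the two factors separately before optimising over a choice $\lambda=\lambda(\eps)\to\infty$. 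It suffices to prove assertion (ii), since (i) is the special case $\norm{.}=\norm{.}_{L_p[0,1]}\in\mathbf{N}(-1/p,p)$, where $\beta=-1/p$ makes the exponent $1/(M-\beta-1/p)$ equal to $1/M$.

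For the first factor I would invoke the small deviation rate \eqref{eqn:rlresult}, namely $-\log\pr{\norm{R^H}\le\lambda\eps}\sim\kappa_{\norm{.}}^M(\lambda\eps)^{-\gamma}$ with $\gamma:=1/(H-\beta-1/p)=1/(M-1/2-\beta-1/p)$, valid as soon as $\lambda\eps\to0$. The essential difference from Theorem~\ref{thm:mostgen} is the second factor: here I have only the hypothesis that $\xi:=\norm{X^{(M)}}_{L_2[0,1]}^2$ is finite almost surely, rather than a small deviation rate for it, so de Bruijn's Tauberian theorem is unavailable. Instead I would use an elementary truncation bound. Since $\xi<\infty$ a.s., one can fix $T$ with $\pr{\xi\le T}\ge 1/2$, and then
\begin{equation*}
\E\exp\left(-\frac{\lambda^2}{2}\xi\right)\ge\E\left[\exp\left(-\frac{\lambda^2}{2}\xi\right)\indi{\xi\le T}\right]\ge\frac12\exp\left(-\frac{\lambda^2 T}{2}\right),
\end{equation*}
so that $-\log\E\exp(-\tfrac{\lambda^2}{2}\xi)\lew\lambda^2$. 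This recovers precisely the $\dealpha=0$ form of the Laplace transform estimate in the proof of Theorem~\ref{thm:mostgen}, where the exponent $\lambda^{1/(\dealpha+1/2)}$ degenerates to $\lambda^2$ and the logarithmic factor disappears.

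Combining the two bounds gives $-\log\pr{\norm{X}\le\eps}\lesssim\kappa_{\norm{.}}^M(\lambda\eps)^{-\gamma}+\tfrac{T}{2}\lambda^2$ up to the additive constant $\log 2$. I would then set $\lambda=D\eps^{-\gamma/(\gamma+2)}$ for a constant $D>0$; a short computation shows $\lambda\eps=D\eps^{2/(\gamma+2)}\to0$ (so \eqref{eqn:rlresult} applies) and that both terms scale like $\eps^{-2\gamma/(\gamma+2)}$, where the identity $2\gamma/(\gamma+2)=1/(M-\beta-1/p)$ is immediate from the definition of $\gamma$. Taking limits yields
\begin{equation*}
\limsup_{\eps\to0}\eps^{1/(M-\beta-1/p)}\left(-\log\pr{\norm{X}\le\eps}\right)\le\inf_{D>0}\left(\kappa_{\norm{.}}^M D^{-\gamma}+\frac{T}{2}D^2\right)<\infty,
\end{equation*}
the residual $\log 2$ being killed by the vanishing prefactor; this is exactly assertion (ii). The main obstacle is the second factor: the point to recognise is that mere almost-sure finiteness of $\norm{X^{(M)}}_{L_2[0,1]}$, carrying \emph{no} rate, already forces the $\lambda^2$ upper bound on the negative log-Laplace transform, which is precisely the formal $\dealpha\to0$ limit that drives the entire estimate.
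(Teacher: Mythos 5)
Your proposal is correct and takes essentially the same approach as the paper: the paper also applies the Chen--Li inequality with $R^H$, $H=M-1/2$, and bounds the Laplace transform below by $q\,e^{-\lambda^2K^2/2}$ via Markov's inequality, which is exactly your truncation estimate with $T=K^2$ and $q$ in place of $1/2$, before choosing $\lambda=\eps^{-\gamma/(2+\gamma)}$. The only cosmetic difference is your extra constant $D$ and the optimisation over it, which is not needed for the weak asymptotic $\lew$ being claimed.
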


\begin{proof} We only have to show (ii). The case of $L_p$-norms, part (i), is only a special case.

If $X^{(\dM)}\in L_2[0,1]$ a.s.\ then there is a $K>0$ such that
$$
\pr{ \norm{X^{(\dM)}}_{L_2[0,1]} \leq K} =: q >0.
$$
Using the Markov inequality, we obtain
$$
q=\pr{ \norm{X^{(\dM)}}_{L_2[0,1]} \leq K}=\pr{ e^{-\frac{\lambda^2}{2}\,\norm{X^{(\dM)}}_{L_2[0,1]}^2} \geq e^{-\frac{\lambda^2 K^2}{2}}} \leq \frac{\E \exp(-\frac{\lambda^2}{2}\,\norm{X^{(\dM)}}_{L_2[0,1]}^2)}{\exp(-\frac{\lambda^2K^2}{2})}.
$$
Concerning the reproducing kernel Hilbert space with argue as in (\ref{eqn:rlappl}). Therefore, Proposition~\ref{prop:chenli} implies that, setting $H:=M-1/2$ and $1/\gamma:=H-\beta-1/p$, we have
$$
\pr{ \norm{X} \leq  \eps} \geq \pr{ \norm{R^H} \leq  \lambda \eps} \E \exp\left( -\frac{\lambda^2}{2} \norm{X^{(\dM)}}_{L_2[0,1]}^2 \right) \geq  e^{- c (\lambda \eps)^{-\gamma} }\, q e^{-\frac{\lambda^2K^2}{2}},
$$
where we used the result for Riemann-Liouville processes (\ref{eqn:rlresult}) for the first term. Setting $\lambda:=\eps^{-\gamma/(2+\gamma)}$, this gives
$$
- \log \pr{ \norm{X} \leq  \eps}\lew \eps^{-2\gamma/(2+\gamma)}=\eps^{-1/(M-\beta-1/p)}.
$$
\end{proof}

In particular, in the case of integer derivatives, we obtain the following.

\begin{cor} \label{cor:nregularity}
If the process $X$ is $n$-times differentiable with $X^{(n)}\in L_2[0,1]$ then
$$
-\log \pr{ \norm{X}_{L_p[0,1]} \leq  \eps} \lew \eps^{-1/n}.
$$
If the process is $C^\infty$ then 
\begin{equation} \label{eqn:cinfty}
\lim_{\eps\to 0} \eps^{\delta}\left( -\log \pr{ \norm{X}_{L_p[0,1]} \leq  \eps} \right) = 0,
\end{equation}
for all $\delta>0$.
\end{cor}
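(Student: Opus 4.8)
The plan is to derive both statements directly from Theorem~\ref{thm:regularity}(i), using that for an integer order $\dM=n$ the fractional derivative introduced in (\ref{eqn:fracder1}) coincides with the ordinary $n$-th derivative, as already remarked after that definition. For the first assertion I would simply invoke Theorem~\ref{thm:regularity}(i) with $\dM=n$: the hypothesis that $X$ is $n$-times differentiable with $X^{(n)}\in L_2[0,1]$ is precisely the hypothesis of that theorem in the integer case, so the conclusion $-\log\pr{\norm{X}_{L_p[0,1]}\leq\eps}\lew\eps^{-1/n}$ follows at once.

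For the $C^\infty$ statement the point is that smoothness of every order lets me apply the first part for arbitrarily large $n$. Since $X$ is $C^\infty$, each derivative $X^{(n)}$ is continuous on the compact interval $[0,1]$, hence bounded and therefore in $L_2[0,1]$ almost surely. Thus the first part applies for every $n\in\N$, producing for each $n$ a constant $c_n>0$ with
$$
-\log\pr{\norm{X}_{L_p[0,1]}\leq\eps}\leq c_n\,\eps^{-1/n}
$$
for all sufficiently small $\eps$; this is exactly the meaning of the weak bound $\lew\eps^{-1/n}$. Now, given $\delta>0$, I would choose an integer $n>1/\delta$, so that $\delta-1/n>0$, and multiply through by $\eps^\delta$ to obtain $\eps^\delta(-\log\pr{\norm{X}_{L_p[0,1]}\leq\eps})\leq c_n\,\eps^{\delta-1/n}\to 0$ as $\eps\to 0$, which is (\ref{eqn:cinfty}).

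I do not expect a genuine analytic obstacle here, since all the work is already carried by Theorem~\ref{thm:regularity}; the corollary is essentially a specialization. The only steps demanding (minor) attention are the identification of the integer-order fractional derivative with the classical derivative, and the correct handling of the universal quantifier over $n$ in the $C^\infty$ case --- that is, letting the order $n$ in the first part grow with the prescribed $\delta$ so that the polynomial gain $\eps^\delta$ beats the small-deviation rate $\eps^{-1/n}$.
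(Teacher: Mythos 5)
Your proposal is correct and matches the paper's approach: the paper presents this corollary as an immediate specialization of Theorem~\ref{thm:regularity}(i) to integer $\dM=n$ (using the remark after (\ref{eqn:fracder1}) that the integer-order fractional derivative coincides with the classical one), and obtains (\ref{eqn:cinfty}) exactly as you do, by applying the first part with $n>1/\delta$. No gaps.
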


A close look at Theorem~\ref{thm:regularity} and Corollary~\ref{cor:nregularity} reveals the following interesting interpretation: if the process has a certain path regularity -- $X^{(\dM)}$ is in $L_2$ -- then the small deviation probability cannot be too small  -- the logarithmic small deviation probability cannot grow faster than $\eps^{-1/M}$. Conversely, if the small deviations grow too fast, then the path of the process cannot be too regular.

This is the first very concrete result of the intuitive fact that path regularity and small deviations for Gaussian processes are closely connected. We stress that beyond $\mathcal{C}^\infty$ the small deviation asymptotics has still distrinct rates giving additional information.

\begin{rem} \label{rem:notsharp} We remark that Theorem~\ref{thm:regularity} and Corollary~\ref{cor:nregularity} usually do {\it not} give the precise small deviation order even though we know the precise path regularity. Typically, $X^{(\dM)}\in L_2$ for all $\dM>\dM_0$. And thus the theorem yields
$$
-\log \pr{ \norm{X}_{L_p[0,1]} \leq  \eps} = \eps^{-1/\dM_0-o(1)}.
$$
However, in many cases one finds that the above holds without the $o(1)$ term:
$$
-\log \pr{ \norm{X}_{L_p[0,1]} \leq  \eps} \approx \eps^{-1/\dM_0}.
$$
One can think of Brownian motion itself (where $\dM_0=1/2$) or Riemann-Liouville processes. So, one can only hope to determine the small deviation asymptotics from the path regularity up to an $o(1)$ term.
\end{rem}

\section{Related questions}
\subsection{Remarks on stable processes} \label{sec:stable}
One may ask whether it is possible to extend the above results beyond the setup of Gaussian processes. This is indeed possible.

Since symmetric $\alpha$-stable processes (in the sense of \cite{ST}) can be represented as conditionally Gaussian processes, the main tool used in the proofs, Theorem~1.2 in \cite{chenli}, can be transfered. Therefore, Theorems~\ref{thm:bmint}, \ref{thm:converseone}, \ref{thm:hsb}, \ref{thm:mostgen}, and \ref{thm:conversegen} as well as Theorem~\ref{thm:regularity} and Corollary~\ref{cor:nregularity} hold true also for symmetric $\alpha$-stable processes.

However, it is easy to construct stable processes such that Theorem~\ref{thm:regularity} and Corollary~\ref{cor:nregularity} do not give sharp results. This can be seen from the following example.

\begin{exa} Let $X$ be a subfractional Brownian motion, i.e.\ $$X(t)=A^{1/2} B^H(t), \qquad t\geq 0,$$ where $A$ is a strictly positive $\alpha/2$-stable random variable and $B^H$ is a fractional Brownian motion independent of $A$. Then obviously $X$ has exactly the same a.s.\ path properties as $B^H$. So, somehow one might want to expect that $X$ should have the same small deviation order as $B^H$. This is not the case, as shown by Samorodnitsky \cite{samorodnitsky98}:
$$
-\log \pr{ \norm{X}_{L_\infty[0,1]}\leq \eps} \approx \eps^{-\frac{1}{1/\alpha-1/2+ H}} \qquad\text{vs.}\quad-\log \pr{ \norm{B^H}_{L_\infty[0,1]}\leq \eps} \sim C \eps^{-1/H}. 
$$
\end{exa}

This underlines that, partially, the small deviation rate is due to the fluctuations of the process, i.e.\ the path regularity, partially it is due to the (in this case: heavy) tail behaviour of the process, i.e.\ the amplitudes of the fluctuations.

\subsection{Eigenvalues of the covariance operator} \label{sec:eigenvalues}
We recall that small deviation probabilities in $L_2$-norm are closely connected (see e.g.\ \cite{najmc03,nazarovnikitin,nani05} and references therein) to the eigenvalues of the integral equation:
$$\lambda_n f_n(t) = \int_0^1 R(t,s) f_n(s) d s,\qquad n\geq 1,$$ where $R(t,s)=\E X(t) X(s)$ is the covariance kernel of the Gaussian process $X$. Our results imply the following.

\begin{cor} Let $\dealpha>0$ and $\debeta\in\R$. Let $(\lambda_n)$, $(\lambda_n^1)$ be the sequence of eigenvalues of the operators given by the kernels
$$
R(t,s) \qquad \text{and, respectively,}\qquad \frac{\partial^2 R}{\partial t\partial s}(t,s)
$$ Then $$\lambda_n^1\lew n^{-\dealpha} (\log n)^{\debeta}$$ implies $$\lambda_n\lew n^{-\dealpha-2} (\log n)^{\debeta}.$$ 
\end{cor}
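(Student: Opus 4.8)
The plan is to translate the statement about eigenvalue decay into a statement about $L_2$ small deviation rates, apply Theorem~\ref{thm:regularity} (in the form relating $X$ and $X^{(M)}$ with $M=1$), and then translate back. The key bridge is the classical connection between the eigenvalues $(\lambda_n)$ of the covariance operator and the $L_2[0,1]$ small deviation probability of the Gaussian process, cited in the surrounding text. Concretely, I would use the known equivalence that, for a centered Gaussian process with covariance kernel $R$, a polynomial-logarithmic decay of the eigenvalues is equivalent to a corresponding small-deviation rate in $L_2$-norm: $\lambda_n \lew n^{-a}(\log n)^{b}$ corresponds to $-\log \pr{\norm{X}_{L_2[0,1]}\leq\eps} \lew \eps^{-1/(a-1)}|\log\eps|^{\,?}$ (with an appropriately transformed logarithmic exponent). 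This transfer principle — made precise in the references \cite{najmc03,nazarovnikitin,nani05} — is what converts each eigenvalue hypothesis into the language in which Theorem~\ref{thm:regularity} operates.

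First I would observe that $\partial^2 R/(\partial t\,\partial s)$ is precisely the covariance kernel of $X'=X^{(1)}$, since $\E X'(t)X'(s)=\partial_t\partial_s \E X(t)X(s)=\partial^2 R/(\partial t\,\partial s)$. Thus the sequence $(\lambda_n^1)$ consists of the eigenvalues of the covariance operator of $X'$. The hypothesis $\lambda_n^1 \lew n^{-\dealpha}(\log n)^{\debeta}$ therefore encodes the $L_2$ small-deviation behaviour of $X'$; applying the eigenvalue/small-deviation dictionary (with $a=\dealpha$, so that $\dealpha>0$ ensures summability and the rate $1/(\dealpha-1)$ is well-defined, or rather in the shifted indexing appropriate to $X'$) yields an upper bound of the form appearing in the hypothesis (\ref{eqn:ass2}) of Theorem~\ref{thm:mostgen} with $M=1$ and the $L_2$-norm ($\beta=-1/2$, $p=2$).

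Next I would invoke Theorem~\ref{thm:mostgen} (or equivalently part~(ii) of Theorem~\ref{thm:regularity} in the finite-$\dealpha$ analogue embodied by Theorem~\ref{thm:bmint} with $M=1$) to pass from the $L_2$ small-deviation rate of $X'=X^{(1)}$ to the $L_2$ small-deviation rate of $X$ itself. This step upgrades the exponent by the amount dictated by one derivative: the small-deviation exponent for $X$ becomes $1/(\dealpha+1-\beta-1/p)=1/(\dealpha+2)$ in the $L_2$ setting, with the logarithmic exponent transforming to $\debeta\dealpha/(\dealpha+2)$ as in the theorem. Finally I would run the eigenvalue/small-deviation dictionary in reverse on $X$ to convert this $L_2$ small-deviation rate back into the claimed decay $\lambda_n \lew n^{-\dealpha-2}(\log n)^{\debeta}$, checking that the shift of the exponent by $2$ in the eigenvalue decay corresponds exactly to the shift by $1$ in the derivative order (one power of $n$ per half-order of differentiation, consistently with $-\log\pr{\dots}\approx\eps^{-2/(\text{exponent}-1)}$ scaling).

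\textbf{The main obstacle} will be making the two directions of the eigenvalue/small-deviation dictionary precise enough to track the logarithmic factors, and ensuring the transformations of the $\log$-exponent in the forward direction (through Theorem~\ref{thm:mostgen}) and the two dictionary passes compose to return exactly $(\log n)^{\debeta}$ rather than some altered power. The polynomial exponents are rigid and easy to verify by the scaling $n^{-a}\leftrightarrow \eps^{-1/(a-1)}$, but the secondary logarithmic corrections are where sign errors and constant-shifts creep in; I would isolate this bookkeeping and verify it against the benchmark cases (e.g. Brownian motion, where $\lambda_n\approx n^{-2}$ and $-\log\pr{\norm{B}_{L_2}\leq\eps}\approx\eps^{-2}$) to confirm the exponent arithmetic before asserting the general logarithmic term.
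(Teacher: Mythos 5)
Your plan is exactly the argument the paper intends (the corollary is stated without proof, the author simply invoking the eigenvalue/small--deviation dictionary of \cite{najmc03,nazarovnikitin,nani05} for the $L_2$-norm together with Theorem~\ref{thm:bmint} for $p=2$): identify $\partial^2R/(\partial t\,\partial s)$ as the covariance of $X'$, translate $\lambda_n^1\lew n^{-\dealpha}(\log n)^{\debeta}$ into $-\log\pr{\norm{X'}_{L_2[0,1]}\leq\eps}\lew\eps^{-2/(\dealpha-1)}|\log\eps|^{\debeta/(\dealpha-1)}$, apply Theorem~\ref{thm:bmint} with its parameter equal to $(\dealpha-1)/2$ to get exponent $2/(\dealpha+1)$ and log-exponent $\debeta/(\dealpha+1)$ for $X$, and translate back to $\lambda_n\lew n^{-\dealpha-2}(\log n)^{\debeta}$. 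Two slips to repair in the write-up: your intermediate formula ``$1/(\dealpha+1-\beta-1/p)=1/(\dealpha+2)$'' conflates the eigenvalue exponent with the small-deviation exponent (with $\beta=-1/2$, $p=2$ the correct chain is $(\dealpha-1)/2\mapsto(\dealpha-1)/2+1=(\dealpha+1)/2$, which still yields $n^{-\dealpha-2}$ after the final translation), and $\dealpha>0$ does \emph{not} ensure summability of the reference sequence $n^{-\dealpha}(\log n)^{\debeta}$ needed for the forward dictionary --- for $0<\dealpha\leq1$ the hypothesis is essentially vacuous and the conclusion must instead be drawn from $X'\in L_2[0,1]$ via Theorem~\ref{thm:regularity}.
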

A similar corollary can be obtained for fractional derivatives.

\subsection{Quantization} \label{sec:quantization}
We recall from \cite{dfms,grafetal} that small deviations for Gaussian processes are closely related to the quantization problem. Let $D(r|X,\norm{.})$ be the quantization error of the process $X$ w.r.t.\ the distortion given by the norm $\norm{.}$, i.e.\ for a normed space $(E,\norm{.})$, $s>0$, and $r>0$,
$$
D(r|X,\norm{.}) := \inf \left\lbrace \left( \E \min_{a\in\mathcal{C}}\norm{X-a}^s \right)^{1/s} ~:~ \mathcal{C} \subseteq E, \log \# \mathcal{C} \leq r \right\rbrace.
$$
The idea behind this quantity is that a random signal $X=X(\omega)$ has to be encoded; as a code one can use a minimizer $a(\omega)\in \mathcal{C}$ (minimizing $\min_{a\in \mathcal{C}} \|X(\omega)-a\|$); and if $\mathcal{C}$ was chosen close to optimal, this procedure gives a lowest possible mean error of the coding. 

From the above results on small deviations and the connection established in \cite{dfms,grafetal} one can obtain the following corollaries. The first gives the flavour of the more general result.

\begin{cor} Let $X$ be a centered Gaussian process on $[0,1]$ that is differentiable and such that $X'\in L_2[0,1]$ a.s.  Let $\dealpha>0$ and $\debeta\in\R$.  Then
$$
D(r|X',\norm{.}_{L_2[0,1]})\lew r^{-\dealpha} (\log r)^{\debeta}
$$
implies 
$$
D(r|X,\norm{.}_{L_p[0,1]})\lew r^{-(\dealpha+1)} (\log r)^{\debeta}
$$
for any $1\leq p\leq \infty$.
\end{cor}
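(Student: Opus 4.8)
The plan is to transfer the small-deviation result of Theorem~\ref{thm:bmint} (the case $\dealpha$, $\debeta$ with $M=1$) through the known equivalence between small deviations and the quantization error, which is exactly the connection recorded in \cite{dfms,grafetal}. The strategy mirrors the structure of the paper: each analytic statement about $X$ is obtained from the corresponding statement about $X'$ by applying an already-established tool, so here I would treat the quantization corollary as the ``image'' of Theorem~\ref{thm:bmint} under the small-deviation/quantization dictionary.

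First I would invoke the general principle from \cite{dfms,grafetal} that relates the high-rate asymptotics of the quantization error $D(r\,|\,Y,\norm{.})$ to the small-deviation rate of $Y$ w.r.t.\ the same norm. Concretely, if
$$
-\log \pr{ \norm{Y} \leq \eps } \approx \eps^{-1/a} |\log \eps|^{b},
$$
then, up to the precise form of the logarithmic correction, the quantization error decays like $r^{-a}(\log r)^{b'}$ for an explicit $b'$. Thus, the hypothesis $D(r\,|\,X',\norm{.}_{L_2[0,1]}) \lew r^{-\dealpha}(\log r)^{\debeta}$ can be read back (via the converse half of this dictionary) as a bound on the small deviations of $X'$ in $L_2$, namely an inequality of exactly the type appearing in the left-hand side~(\ref{eqn:ass1}) of Theorem~\ref{thm:bmint} with parameters $\dealpha$ and $\debeta$. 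Then Theorem~\ref{thm:bmint} upgrades this into the corresponding small-deviation bound for $X$ in the $L_p$-norm with the shifted exponent $1/(\dealpha+1)$ and the logarithmic power $\debeta\dealpha/(\dealpha+1)$. Finally I would apply the forward half of the dictionary once more to convert that small-deviation bound for $X$ back into the claimed quantization estimate $D(r\,|\,X,\norm{.}_{L_p[0,1]}) \lew r^{-(\dealpha+1)}(\log r)^{\debeta}$.

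The main obstacle, and the only point requiring genuine care, is matching the exponents and especially the logarithmic corrections through this round trip. The quantization/small-deviation correspondence in \cite{dfms,grafetal} typically comes with a specific relationship between the power $b$ of $|\log\eps|$ in the small-deviation rate and the power $b'$ of $\log r$ in the quantization rate, and these transformations must be tracked so that the final logarithmic exponent comes out as $\debeta$ rather than some perturbed value. In particular one must check that the factor $\dealpha/(\dealpha+1)$ introduced by Theorem~\ref{thm:bmint} in the log-power is exactly compensated by the quantization conversion, so that the net exponent of $\log r$ on the right-hand side is again $\debeta$. I expect this to work out precisely because the passage $\dealpha \mapsto \dealpha+1$ in the polynomial rate is accompanied by the complementary rescaling of the logarithmic exponent in both directions of the dictionary, but verifying this cancellation is where the bookkeeping lives.

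One caveat I would flag: the constants and the precise form of the logarithmic term depend on using the weak-asymptotics symbol $\lew$ throughout (as in the statement), so I would not attempt to produce sharp constants, only the correct polynomial-and-logarithmic order. This is consistent with the spirit of Remark~\ref{rem:notsharp}, where the author already stresses that these transfer results deliver the correct order but not the exact small-deviation constant.
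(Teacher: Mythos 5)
Your proposal is correct and is essentially the argument the paper intends: the corollary is obtained precisely by reading the hypothesis back through the quantization/small-deviation dictionary of \cite{dfms,grafetal} as a small-deviation bound for $X'$ in $L_2$, applying Theorem~\ref{thm:bmint}, and converting the resulting $L_p$ small-deviation bound for $X$ back into a quantization bound. The logarithmic bookkeeping you flag does close up: the hypothesis corresponds to $-\log\pr{\norm{X'}_{L_2[0,1]}\leq\eps}\lew\eps^{-1/\dealpha}|\log\eps|^{\debeta/\dealpha}$, Theorem~\ref{thm:bmint} turns the exponent $\debeta/\dealpha$ into $(\debeta/\dealpha)\cdot\dealpha/(\dealpha+1)=\debeta/(\dealpha+1)$, and the forward conversion multiplies by $\dealpha+1$, returning exactly $\debeta$.
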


This result can be used in the following way: In order to find the quantization rate of $X$ w.r.t.\ the $L_p$-norm, it suffices to find a good estimate for the (easier) quantization problem for $X'$ w.r.t.\ the $L_2$-norm. Unfortunately, this connection is \textit{not constructive}, so it is not clear how to obtain a good quantizer for $X$ in $L_p$ given that one has a good quantizer for the derivative $X'$ w.r.t.\ $L_2$ distortion. It would be interesting to find a constructive proof of this fact.

Now we come to a more general corollary for the quantization error.

\begin{cor} \label{cor:quantallg} Let $X$ be a Gaussian process on $[0,1]$, $\norm{.}\in \mathbf{N}(\beta,p)$, $M>0$, $\dealpha>0$, and $\debeta\in\R$. Then
$$
D(r|X^{(\dM)},\norm{.}_2)\lew r^{-\dealpha} (\log r)^{\debeta}
$$
implies 
$$
D(r|X,\norm{.})\lew r^{-(\dealpha+M-\beta-1/p)} (\log r)^{\debeta}.
$$
\end{cor}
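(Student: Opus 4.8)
The plan is to run the small-deviation--quantization dictionary of \cite{dfms,grafetal} at both ends, using Theorem~\ref{thm:mostgen} as the bridge in the middle. Recall that for a centered Gaussian random vector the quantization error and the small ball function are tied together by the correspondence
$$
-\log \pr{\norm{X}\leq \eps}\approx \eps^{-a}\,|\log\eps|^{b}
\qquad\Longleftrightarrow\qquad
D(r|X,\norm{.})\approx r^{-1/a}\,(\log r)^{b/a},
$$
which is morally an ``inverse function'' relation: resolving $X$ at scale $\eps$ costs about $-\log\pr{\norm{X}\leq\eps}$ bits, so inverting $\eps^{-a}|\log\eps|^{b}=r$ gives $\eps\approx r^{-1/a}(\log r)^{b/a}$. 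What matters for us is that this transfer respects one-sided bounds in the \emph{same} direction: a smaller small ball exponent means a more concentrated process, hence a smaller quantization error, so an upper bound on one quantity yields an upper bound on the other, and conversely.

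First I would feed the hypothesis $D(r|X^{(\dM)},\norm{.}_2)\lew r^{-\dealpha}(\log r)^{\debeta}$ into the quantization-to-small-ball direction of this dictionary. Matching $r^{-1/a}(\log r)^{b/a}$ against $r^{-\dealpha}(\log r)^{\debeta}$ forces $a=1/\dealpha$ and $b=\debeta/\dealpha$, so that
$$
-\log \pr{\norm{X^{(\dM)}}_{L_2[0,1]}\leq\eps}\lew \eps^{-1/\dealpha}\,|\log\eps|^{\debeta/\dealpha}.
$$
This is exactly the hypothesis (\ref{eqn:ass2}) of Theorem~\ref{thm:mostgen} for the given $\norm{.}\in\mathbf{N}(\beta,p)$ and derivative order $M$, with small-deviation parameters $\dealpha$ and $\debeta/\dealpha$ playing the roles of $\dealpha$ and $\debeta$ there. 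Applying Theorem~\ref{thm:mostgen} then produces
$$
-\log \pr{\norm{X}\leq\eps}\lew \eps^{-1/(\dealpha+\dM-\beta-1/p)}\,|\log\eps|^{\debeta/(\dealpha+\dM-\beta-1/p)},
$$
the logarithmic exponent collapsing from $(\debeta/\dealpha)\cdot\dealpha/(\dealpha+\dM-\beta-1/p)$.

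Finally I would run the dictionary in the reverse, small-ball-to-quantization direction with $a=1/(\dealpha+\dM-\beta-1/p)$ and $b=\debeta/(\dealpha+\dM-\beta-1/p)$. Since then $1/a=\dealpha+\dM-\beta-1/p$ and $b/a=\debeta$, this delivers precisely the claimed bound
$$
D(r|X,\norm{.})\lew r^{-(\dealpha+\dM-\beta-1/p)}\,(\log r)^{\debeta}.
$$

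The main obstacle is not the exponent bookkeeping, which is routine, but justifying the \emph{one-sided} form of the small-ball--quantization correspondence together with its logarithmic corrections. The cleanest references phrase the equivalence under strong asymptotics with a regularly varying small ball function, so to use it here I must first check that each function $\eps^{-a}|\log\eps|^{b}$ arising is regularly varying of negative index $-a$ (true, as $\eps^{-a}$ is regularly varying and $|\log\eps|^{b}$ is slowly varying), and then invoke the version of the result in \cite{dfms,grafetal} that transfers upper and lower bounds separately, so that the weak asymptotics $\lew$ survive both passages. A secondary point to verify is that $X^{(\dM)}$ and $X$ are genuine centered Gaussian random vectors in the relevant separable Banach spaces --- guaranteed once $X^{(\dM)}\in L_2[0,1]$ a.s.\ and $\norm{.}\in\mathbf{N}(\beta,p)$ is a true norm --- so that the Gaussian quantization theory is applicable throughout.
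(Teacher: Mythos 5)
Your proposal is correct and follows essentially the same route as the paper, which derives this corollary (without writing out details) precisely by passing through the small-deviation/quantization dictionary of \cite{dfms,grafetal} on either side of Theorem~\ref{thm:mostgen}; your exponent bookkeeping, including the collapse of the logarithmic exponent to $\debeta$, is right. The one-sided transfers you flag are indeed the only points needing care, and they go in the favourable directions: quantization-to-small-ball uses the elementary Anderson-inequality bound $D(r)\gew\varphi^{-1}(r)$, while small-ball-to-quantization is the regularly-varying case covered by \cite{dfms,grafetal}.
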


In particular, our results have the following corollary corresponding to the case $\dealpha=0$ above.

\begin{cor} \label{cor:quantremainder} Let $X$ be a Gaussian process on $[0,1]$, $\norm{.}\in \mathbf{N}(\beta,p)$, and $M>0$. If $X^{(\dM)}\in L_2[0,1]$ a.s. then 
$$
D(r|X,\norm{.})\lew r^{-(M-\debeta-1/p)}.
$$
\end{cor}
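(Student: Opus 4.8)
The plan is to derive Corollary~\ref{cor:quantremainder} directly from the more general quantization result Corollary~\ref{cor:quantallg}, by specializing it to the boundary case $\dealpha=0$, exactly in parallel to the way Corollary~\ref{cor:nregularity} and Theorem~\ref{thm:regularity} arise from Theorem~\ref{thm:mostgen} in the regime $\dealpha\to 0$. The key observation is that the hypothesis $X^{(\dM)}\in L_2[0,1]$ a.s.\ is precisely the qualitative statement that the derivative process has a.s.\ finite $L_2$-norm, and this is exactly what allows one to control the quantization error of $X^{(\dM)}$ at the crudest possible rate.

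First I would record that if $X^{(\dM)}\in L_2[0,1]$ a.s., then $\norm{X^{(\dM)}}_{L_2[0,1]}$ is an a.s.\ finite Gaussian-functional, so (by Fernique's theorem, or simply because it is the norm of a Gaussian vector in the separable Hilbert space $L_2[0,1]$) it has finite moments of every order. In particular $D(r\mid X^{(\dM)},\norm{.}_2)$ is finite and bounded for all $r$; a single-point codebook already yields a bounded value. This means the hypothesis of Corollary~\ref{cor:quantallg} holds with $\dealpha=0$ and $\debeta=0$, i.e.\ $D(r\mid X^{(\dM)},\norm{.}_2)\lew r^{0}(\log r)^{0}=\text{const}$, which is the trivial bound. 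Plugging $\dealpha=0$ and $\debeta=0$ into the conclusion of Corollary~\ref{cor:quantallg} gives
$$
D(r\mid X,\norm{.})\lew r^{-(0+M-\beta-1/p)}=r^{-(M-\beta-1/p)},
$$
which is the desired estimate.

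The only substantive point to check is that Corollary~\ref{cor:quantallg} remains valid at the boundary value $\dealpha=0$, since that corollary is stated for $\dealpha>0$. There are two routes. One is to verify that the passage through the small-deviation/quantization dictionary of \cite{dfms,grafetal} and the underlying Theorem~\ref{thm:regularity} (which \emph{is} the $\dealpha=0$ analogue of Theorem~\ref{thm:mostgen}) goes through unchanged; indeed Theorem~\ref{thm:regularity}(ii) already supplies the matching small-deviation bound $-\log\pr{\norm{X}\leq\eps}\lew\eps^{-1/(M-\beta-1/p)}$, and translating it via the established correspondence between logarithmic small-deviation order $\eps^{-\theta}$ and quantization order $r^{-1/\theta}$ yields $D(r\mid X,\norm{.})\lew r^{-(M-\beta-1/p)}$. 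The second, cleaner route is simply to invoke continuity: take $\dealpha>0$ arbitrarily small in Corollary~\ref{cor:quantallg}, obtaining $D(r\mid X,\norm{.})\lew r^{-(\dealpha+M-\beta-1/p)}$, and let $\dealpha\downarrow 0$.

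The main obstacle, such as it is, is the legitimacy of this boundary passage rather than any hard estimate. One must confirm that the exponent $M-\beta-1/p$ is admissible, i.e.\ strictly positive, so that the quantization rate is genuinely decaying and the small-deviation correspondence applies; this corresponds to $H-\beta-1/p>0$ with $H=M-1/2$, the same positivity condition implicit throughout Section~\ref{sec:pr}. Granting that, the result is a direct corollary and requires no new computation beyond specializing the parameters.
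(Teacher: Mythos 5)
Your first route is the right one and is essentially what the paper intends: the paper offers no separate proof of Corollary~\ref{cor:quantremainder}, presenting it as the $\dealpha=0$ instance of the quantization results, which in substance means applying Theorem~\ref{thm:regularity}(ii) to get $-\log\pr{\norm{X}\leq\eps}\lew\eps^{-1/(M-\beta-1/p)}$ and then passing through the small-deviation/quantization dictionary of \cite{dfms,grafetal} (a lower bound on the small ball probability yields an upper bound on the quantization error, with logarithmic order $\eps^{-\theta}$ corresponding to quantization order $r^{-1/\theta}$). You also correctly read the exponent in the statement as $M-\beta-1/p$ rather than the literal $M-\debeta-1/p$, which is a typo in the source.

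However, your ``second, cleaner route'' does not work. To invoke Corollary~\ref{cor:quantallg} with some $\dealpha>0$ you would need its hypothesis $D(r\mid X^{(\dM)},\norm{.}_2)\lew r^{-\dealpha}$, i.e.\ a genuine polynomial decay of the quantization error of the derivative process. The assumption $X^{(\dM)}\in L_2[0,1]$ a.s.\ only gives boundedness of $D(r\mid X^{(\dM)},\norm{.}_2)$ (your $\dealpha=0$, $\debeta=0$ observation); for a Gaussian vector in $L_2$ the quantization error can decay arbitrarily slowly (e.g.\ only logarithmically, when the covariance eigenvalues decay like $n^{-1}(\log n)^{-2}$), so no positive $\dealpha$ is available and there is nothing to send to $0$. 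The limiting argument is therefore vacuous here; the boundary case must be handled directly, as in your first route, exactly as Theorem~\ref{thm:regularity} handles the boundary case of Theorem~\ref{thm:mostgen} by replacing the de Bruijn Tauberian step with the elementary Markov-inequality bound $\E\exp(-\tfrac{\lambda^2}{2}\norm{X^{(\dM)}}_{L_2[0,1]}^2)\geq q\,e^{-\lambda^2K^2/2}$.
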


Results that are very similar to Corollary~\ref{cor:quantremainder} for {\it not necessarily Gaussian processes} were obtained in Lemma~2.1 in \cite{steffenpr} (also see \cite{dereichvormoor}). The technique used there is based on entropy numbers of embeddings and is thus more robust than the Gaussian techniques employed here. However, it seems only possible to use these techniques in the sense of `remainder terms' as in Corollary~\ref{cor:quantremainder}, not the way we use it in Theorem~\ref{thm:mostgen}.

Furthermore, we refer to \cite{lp} for similar results translating the mean path regularity (given by the behaviour of the covariance function in the Gaussian case) into estimates for the quantization error. There, wavelet representations for the process are used to obtain good quantizers. It seems possible that this approach can help to extend the results of this paper from the Gaussian setup to other processes.

Remark~\ref{rem:notsharp} applies accordingly to Corollary~\ref{cor:quantremainder}. However, we stress that Corollary~\ref{cor:quantallg} is sharp.





\subsection{Entropy numbers of operators} \label{sec:entropy}
Via the connections between small deviations and entropy numbers for linear operators (\cite{kuelbsli}, \cite{lilindeannals1999}, \cite{atpa}) we can obtain the following corollaries for operators related to Gaussian processes.

Let $M>0$ and let $K : [0,1]\times [0,1] \to \R$ be a measurable function with
$$
K(t,.)\in L_2[0,1] \qquad\text{and}\qquad \frac{\partial^{\dM} K}{\partial t^{\dM}}(t,.)\in L_2[0,1],
$$
where the fractional derivative is as defined in (\ref{eqn:fracder1}).

We consider the following operators
\begin{eqnarray*}
u : L_2[0,1] \to L_p[0,1],&\qquad & (u f)(t) := \int_0^1 K(t,s) f(s) \, d s\\
u_{\dM} : L_2[0,1] \to L_2[0,1],&\qquad & (u_{\dM} f)(t) := \int_0^1 \frac{\partial^{M} K}{\partial t^{\dM}} (t,s) f(s) \, d s.
\end{eqnarray*}

For a linear operator $u : E \to F$ between Banach spaces $E$ and $F$ one defines the entropy numbers as follows \cite{Carl}:
$$
e_n(u : E \to F) = \inf \{ \eps>0 \,:\, \exists f_1, \ldots, f_{2^{n-1}} \in F \,\forall x\in E, \norm{x}\leq 1 \,:\ \norm{u(x)-f} \leq \eps \}.
$$

The following theorem relates the (easy) $L_2[0,1]\to L_2[0,1]$ entropy numbers of $u_{\dM}$ to the (more difficult) $L_2[0,1]\to L_p[0,1]$ entropy numbers of $u$. 

\begin{thm} Let $\dealpha>0$, $\debeta\in\R$, and $1\leq p\leq \infty$. Set $E=L_p[0,1]$. If
$$
e_n(u_{\dM} : L_2[0,1] \to L_2[0,1]) \lew n^{-1/2-\dealpha} (\log n)^{\debeta}
$$
then
$$
e_n(u : L_2[0,1] \to L_p[0,1]) \lew n^{-1/2-\dealpha-M} (\log n)^{\debeta}.
$$
\end{thm}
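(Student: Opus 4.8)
The plan is to route the statement through the Gaussian small-deviation results of Section~\ref{sec:general}, using the kernel $K$ to build a Gaussian process whose generating operator is $u$. Concretely, I would introduce the centered Gaussian process
$$
X(t) := \int_0^1 K(t,s)\, d B(s), \qquad t\in[0,1],
$$
with $B$ a Brownian motion, so that $X$ has covariance operator $uu^{*}$ and its reproducing kernel Hilbert space is precisely the image $u(L_2[0,1])$ of the unit ball, equipped with the induced norm. Differentiating the Wiener integral in the fractional sense of (\ref{eqn:fracder1}) shows that $X^{(\dM)}(t)=\int_0^1 \frac{\partial^{\dM} K}{\partial t^{\dM}}(t,s)\, d B(s)$ is generated in the same manner by $u_{\dM}$. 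Thus $u$ is the generating operator of $X$ and $u_{\dM}$ that of $X^{(\dM)}$, and the operator entropy numbers $e_n(u)$, $e_n(u_{\dM})$ coincide, up to universal constants, with the entropy numbers of the images of the unit balls, i.e.\ of the respective RKHS unit balls.

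Next I would pass from entropy numbers to small deviations through the one-sided Kuelbs--Li / Li--Linde correspondence (\cite{kuelbsli}, \cite{lilindeannals1999}, \cite{atpa}): for a centered Gaussian process $G$ with generating operator $v:L_2[0,1]\to F$, in the regime of regular variation one has
$$
e_n(v:L_2 \to F) \lew n^{-1/2-1/\gamma}(\log n)^{\theta/\gamma}
\quad\Longleftrightarrow\quad
-\log \pr{\norm{G}_F \leq \eps} \lew \eps^{-\gamma}|\log\eps|^{\theta},
$$
with upper bounds preserved in both directions. Applied to $G=X^{(\dM)}$, $v=u_{\dM}$, $F=L_2[0,1]$, with $\gamma=1/\dealpha$ and $\theta=\debeta/\dealpha$, the hypothesis turns into
$$
-\log \pr{\norm{X^{(\dM)}}_{L_2[0,1]} \leq \eps} \lew \eps^{-1/\dealpha}|\log\eps|^{\debeta/\dealpha}.
$$

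This is exactly the hypothesis of Theorem~\ref{thm:mostgen}, which I would invoke with the norm $\norm{.}_{L_p[0,1]}\in\mathbf{N}(-1/p,p)$ (so that $\beta=-1/p$) and with $\debeta/\dealpha$ in place of $\debeta$. Since $\dealpha+\dM-\beta-1/p=\dealpha+\dM$, the theorem delivers
$$
-\log \pr{\norm{X}_{L_p[0,1]} \leq \eps} \lew \eps^{-1/(\dealpha+\dM)}|\log\eps|^{\debeta/(\dealpha+\dM)}.
$$
Translating back via the same correspondence, now with $\gamma'=1/(\dealpha+\dM)$ and $\theta'=\debeta/(\dealpha+\dM)$, yields
$$
e_n(u:L_2\to L_p)\lew n^{-1/2-1/\gamma'}(\log n)^{\theta'/\gamma'}=n^{-1/2-\dealpha-\dM}(\log n)^{\debeta},
$$
which is the assertion.

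The difficulty here is bookkeeping rather than conceptual. The one point requiring genuine care is that the Kuelbs--Li / Li--Linde link be used in its correct one-directional form --- an upper bound on entropy numbers giving an upper bound on $-\log$ of the small-ball probability, and conversely --- since we possess no matching lower bounds and hence cannot appeal to the two-sided equivalence; the physically correct direction (a larger reproducing kernel Hilbert space, i.e.\ larger entropy numbers, makes small deviations less likely and thereby enlarges $-\log\P$) must be confirmed against the precise statements in \cite{lilindeannals1999}. The remaining work is to track the logarithmic exponents through the two translations and through Theorem~\ref{thm:mostgen}; it is precisely the cancellation $\theta'/\gamma'=\debeta$ that restores the clean power $(\log n)^{\debeta}$ in the conclusion.
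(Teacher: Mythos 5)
Your proposal is correct and is exactly the route the paper intends: the paper gives no explicit proof of this theorem, deriving it from the Kuelbs--Li / Li--Linde correspondence between entropy numbers of the generating operator and small deviations of the associated Gaussian process, combined with Theorem~\ref{thm:mostgen} applied to $X(t)=\int_0^1 K(t,s)\,\d B(s)$ with $\beta=-1/p$. Your exponent bookkeeping checks out, and both directions of the correspondence you need (entropy upper bound to small-ball lower bound for $u_{\dM}$, and the converse for $u$) are indeed available unconditionally in \cite{kuelbsli}, \cite{lilindeannals1999}.
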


This theorem has the following interpretation: the regularity of the kernel (namely, the fact that $\partial^\dM K/ \partial t^\dM$ is in $L_2[0,1]$) can be used to translate an (easy) $L_2\to L_2$ estimate for $u_{\dM}$ into an $L_2\to L_p$ estimate for $u$.

We remark that this theorem is \emph{not} an embedding type theorem, since it holds for all $p\geq 2$ and $1\leq p\leq 2$.

\begin{rem} Let us mention that one can treat the cases $\dealpha=0$ and $\dealpha=\infty$ accordingly.

First we look at the case $\dealpha=0$. If for some $\debeta>1/2$
$$
e_n(u_{\dM} : L_2[0,1] \to L_2[0,1]) \lew n^{-1/2} (\log n)^{-\debeta}
$$
then
$$
e_n(u : L_2[0,1] \to L_p[0,1]) \lew n^{-1/2-\dM}.
$$


The case $\dealpha=\infty$ can  also  be treated as above. Namely, for $\gamma>0$
$$
|\log e_n(u_{\dM} : L_2[0,1] \to L_2[0,1])| \gew n^{-\gamma} \quad \Rightarrow\quad  |\log e_n(u : L_2[0,1] \to L_p[0,1])| \gew n^{-\gamma}.
$$
\end{rem}

Similar estimates can be obtained for other norms. In particular, if the $L_p[0,1]$-norm is replaced by a norm $\norm{.}\in\mathbf{N}(\beta,p)$ then $M$ in the assertions has to be replaced by $M-\beta-1/p$.

%

\subsection{Examples} \label{sec:examples}

The first example is the integrated fractional Brownian motion.

\begin{exa} Consider the case when $X$ is an integrated fractional Brownian motion. Then one obtains
$$
-\log \pr{ \norm{X}_{L_p[0,1]} \leq  \eps} \lew \eps^{-\frac{1}{H + 1}}.
$$
See e.g.\ \cite{nazarovnikitin}. The lower bound can be obtained by comparing $\norm{X}_{L_p[0,1]}$ to the $L_2$-norm of $X$ itself, which has the same order. Note that the $H$ comes from fractional Brownian motion (and there from being H\"{o}lder up to $H$) and the $1$ from one integration.

Consider the case where $X$ is $m$-times integrated fractional Brownian motion. Then $X^{(m)}$ is precisely fractional Brownian motion and 
Theorem~\ref{thm:hsb} states that a lower bound for the $L_2$ small deviations of fractional Brownian motion can be translated into a lower bound for the $L_p$ small deviations of $m$-times integrated fractional Brownian motion. The result is
$$
-\log \pr{ \norm{X}_{L_p[0,1]} \leq  \eps} \lap \eps^{-1/(H+m)},
$$
which is the correct order, cf.\ \cite{nazarovnikitin} and Theorem~1.3 in \cite{chenli} for the Brownian case. The lower bound follows by comparison to the $L_2$ small deviations of $X$ itself. This method was already used in \cite{chenli} for the Brownian case $H=1/2$.
\end{exa}

The second example concerns a conjecture by Lifshits and Simon \cite{lifshitssimon2004}.

\begin{exa} Set
\begin{equation} \label{eqn:RL}
R^H(t):=\int_0^t (t-s)^{H-1/2} \, d B(s),\quad B^H(t):=R^H + \int_{-\infty}^0 (t-s)^{H-1/2} - (-s)^{H-1/2}\, d B(s), 
\end{equation}
where $B$ is a Brownian motion. Then $B^H$ is a fractional Brownian motion and $R^H$ is a Riemann-Liouville process. Then one can look at the difference process $M=R^H-B^H$. This process is $\mathcal{C}^\infty$; and thus we have by Corollary~\ref{cor:nregularity}
$$
\lim_{\eps\to 0} \eps^{\delta}\left( -\log \pr{ \norm{X}_{L_p[0,1]} \leq  \eps} \right) = 0,\qquad \text{for any $\delta>0$,}
$$
as conjectured by Lifshits and Simon  \cite{lifshitssimon2004}. In particular, this makes the proof of Theorem~12 in \cite{lifshitssimon2004} redundant. The same result is true if $B^H$ is a so-called linear fractional stable motion and $R^H$ is a symmetric stable Riemann-Liouville process, see \cite{lifshitssimon2004}. \end{exa}

The next example is of similar type.

\begin{exa}
We consider so-called stable convolutions \cite{kwapienrosinski}, i.e.\ processes
$$
X(t)=\int_0^t f(t-s) \, d B(s),\qquad t\geq 0,
$$
where $B$ is Brownian motion and $f$ is a smooth function except possibly at zero, where we assume $f(x)=x^{H-1/2} g(x)$ with $H>0$ and a function $g$ with $g(0)=1$. This setup was studied in \cite{asimon}. The goal is to show that $X$ has the same small deviation order (and path regularity) as a Riemann-Liouville process $R^H$.

We assume that $g$ has the Taylor expansion $g(x)=\sum_{n=0}^\infty a_n x^n$, $0\leq x \leq 1$. Let $R^H$ be a Riemann-Liouville process as defined in (\ref{eqn:RL}). Then we can represent $X$ as
\begin{multline*}
X(t)=\int_0^t (t-s)^{H-1/2} g(t-s) \, d B(s) \\ = \int_0^t (t-s)^{H-1/2} \, d B(s) + \sum_{n=1}^\infty \int_0^t (t-s)^{H-1/2+n} a_n \, d B(s) = R^H(t) + \sum_{n=1}^\infty a_n R^{H+n}(t).
\end{multline*}
Here the processes $R^{H+n}$, $n\geq 0$, are \emph{not} independent but rather obtained by integrating the same Brownian motion as in (\ref{eqn:RL}).

Now the weak decorrelation inequality of Li \cite{lidecorr} implies the following. In order to show that $X$ and $R^H$ have the same small deviation rate it suffices to know that the difference process $X-R^H$ has a lower small deviation order than $R^H$. However, this follows easily from Theorem~\ref{thm:regularity}, since we know that $X-R^H$ is smoother than $R^H$.

This gives a significantly shorter proof of Theorem~2.1 in \cite{asimon}.  The non-Gaussian stable case also treated in \cite{asimon} cannot be handled this way due to the absence of the decorrelation inequality.
\end{exa}

The above technique can be applied in general: assume $X$ can be represented as $X=Y+Z$ with $Y$ and $Z$ being not necessarily independent Gaussian processes. If $Z$ is smoother than $Y$ we can use Theorem~\ref{thm:regularity} to show that $Z$ has a lower small deviation rate and then Li's weak decorrelation inequality \cite{lidecorr} to show that $X$ and $Y$ have the same small deviations. We summarize this in the following corollary.

\begin{cor} \label{cor:remainderterm}
Let $X$ be a centered Gaussian random variable with values in a normed space $(E,\norm{.})$, where $\norm{.}\in\mathbf{N}(\beta,p)$. Let $\kappa>0$, $\gamma>0$, and $\ell$ be a slowly varying function. Assume we can represent $X$ as $X=Y+Z$ with $Y$ and $Z$ also Gaussian random variables in $E$ (not necessarily independent). If
$$
-\log \pr{\norm{Y}\leq \eps} \sim \kappa \eps^{-\gamma} \ell(\eps)
$$
and $Z^{(M)}\in L_2[0,1]$ for some $M>1/\gamma-\beta-1/p$, then
$$
-\log \pr{\norm{X}\leq \eps} \sim \kappa \eps^{-\gamma} \ell(\eps).
$$ 
The same holds for weak asymptotics.
\end{cor}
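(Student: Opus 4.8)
The plan is to prove the two matching bounds $\lap$ and $\gap$ separately, the whole point being that under the hypotheses $Z$ is negligible at the small-deviation scale of $Y$. First I would record the consequence of the smoothness assumption on the remainder: since $Z^{(M)}\in L_2[0,1]$ a.s.\ and $\norm{.}\in\mathbf{N}(\beta,p)$, Theorem~\ref{thm:regularity}(ii) applied to $Z$ gives
$$
-\log \pr{\norm{Z}\leq \eps} \lew \eps^{-1/(M-\beta-1/p)}.
$$
The hypothesis $M>1/\gamma-\beta-1/p$ is exactly $1/(M-\beta-1/p)<\gamma$, so $Z$ has a \emph{strictly smaller} small-deviation exponent than $Y$; in particular $-\log\pr{\norm{Z}\leq c\eps}=o(\eps^{-\gamma}\ell(\eps))$ for every fixed $c>0$. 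This is the only place the regularity of $Z$ enters, and it is what makes $Z$ a genuine `remainder term'.

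For the upper bound on $-\log\pr{\norm{X}\leq\eps}$ I would combine the triangle inequality with Li's weak decorrelation inequality \cite{lidecorr}. Fixing $a\in(0,1)$, the inclusion $\{\norm{Y}\leq (1-a)\eps\}\cap\{\norm{Z}\leq a\eps\}\subseteq\{\norm{X}\leq\eps\}$ gives
$$
\pr{\norm{X}\leq\eps}\geq \pr{\norm{Y}\leq (1-a)\eps,\ \norm{Z}\leq a\eps}.
$$
Since $(Y,Z)$ is jointly Gaussian and the two events are symmetric and convex, Li's inequality bounds the joint probability below by a product of the two marginal small-deviation probabilities, at the cost of shrinking each radius by a factor that can be sent to $1$. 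Taking $-\log$ and using the first paragraph, the $Z$-term is of order $o(\eps^{-\gamma}\ell(\eps))$ and drops out, while the $Y$-term contributes $\kappa((1-a)\eps)^{-\gamma}\ell((1-a)\eps)$. Slow variation of $\ell$ gives $\ell((1-a)\eps)\sim\ell(\eps)$, so this is $\sim \kappa(1-a)^{-\gamma}\eps^{-\gamma}\ell(\eps)$; letting $a\to 0$ yields $-\log\pr{\norm{X}\leq\eps}\lap \kappa\eps^{-\gamma}\ell(\eps)$.

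The lower bound is symmetric, now writing $Y=X-Z$. The inclusion $\{\norm{X}\leq (1-a)\eps\}\cap\{\norm{Z}\leq a\eps\}\subseteq\{\norm{Y}\leq\eps\}$ together with Li's inequality give, after taking $-\log$ and rearranging,
$$
-\log\pr{\norm{X}\leq (1-a)\eps}\gap -\log\pr{\norm{Y}\leq\eps} - \left(-\log\pr{\norm{Z}\leq a\eps}\right)\sim \kappa\eps^{-\gamma}\ell(\eps),
$$
again because the $Z$-term is negligible; reparametrising $(1-a)\eps$ and letting $a\to 0$ produces $-\log\pr{\norm{X}\leq\eps}\gap \kappa\eps^{-\gamma}\ell(\eps)$. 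Combining the two bounds yields the claimed strong asymptotic, and the weak-asymptotics version follows from the same argument tracking only the exponent $\gamma$ rather than the constant $\kappa$. I expect the main obstacle to be the bookkeeping needed to extract the \emph{sharp} constant $\kappa$: one must control the radius distortions introduced by the decorrelation inequality and by the splitting parameter $a$ uniformly, and invoke regular variation of $\eps\mapsto\eps^{-\gamma}\ell(\eps)$ so that all the $(1-a)^{\mp\gamma}$ and shrinkage factors tend to $1$ in the correct order of limits.
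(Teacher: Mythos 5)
Your proposal is correct and follows essentially the same route the paper indicates (the paper only sketches this corollary in the preceding paragraph): apply Theorem~\ref{thm:regularity} to $Z$ to see that its small deviation order $\eps^{-1/(M-\beta-1/p)}$ is $o(\eps^{-\gamma}\ell(\eps))$, then use Li's weak decorrelation inequality in both directions of the decomposition. One small imprecision: in Li's inequality the two shrinkage factors are $\lambda$ and $\sqrt{1-\lambda^2}$, so they cannot both be sent to $1$; this is harmless here because, as you note, the $Z$-factor is negligible at every fixed scale $c\eps$, so only the $Y$- (resp.\ $X$-) factor needs its shrinkage sent to $1$.
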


We finish with another example of stable processes.

\begin{exa} We consider integrated linear fractional stable motions (LFSMs), cf.\ \cite{ST}. Let $a,b\in\R$ and $H>0$ and define
$$
X^{a,b}(t):=\int_\R a((t-s)_+^{H-1/\alpha} - (-s)_+^{H-1/\alpha}) + b((t-s)_-^{H-1/\dealpha} - (-s)_-^{H-1/\dealpha}) \, d Z(s),
$$
where $Z$ is a symmetric $\alpha$-stable L\'{e}vy process, $x_+:=\max(x,0)$ and $x_-:=(-x)_+$. The small ball problem for these processes was treated in \cite{lifshitssimon2004} ($a=1$, $b=0$) and in \cite{aspl1} (general $a,b$) to the end that $X^{a,b}$ has the same small deviation rate (for any $\norm{.}\in\mathbf{N}(\beta,p)$) as the corresponding Riemann-Liouville process. From Theorem~\ref{thm:mostgen} follows a bound for the small deviation rate of integrated $X^{a,b}$.
\end{exa}

\addcontentsline{toc}{section}{References}
  \bibliographystyle{alpha}

\end{document}